\documentclass{amsart}
\usepackage{bbm}
\usepackage{graphicx}
\usepackage{verbatim}
\usepackage{color}

\usepackage[latin1]{inputenc}

\newtheorem{theorem}{Theorem}

\theoremstyle{definition}
\newtheorem{prop}{Proposition}
\newtheorem{lemma}{Lemma}

\newtheorem{corollary}{Corollary}

\newtheorem{assumption}{Assumption}[part]

\setcounter{tocdepth}{1}

\def\N{{\mathbb N}}
\def\R{{\mathbb R}}

\def\P{{\mathbb P}}
\def\E{{\mathbb E}}

\newcommand{\diff}{\mathop{}\mathopen{}\mathrm{d}}
\newcommand\croc[1]{\left\langle #1\right\rangle}
\newcommand\ind[1]{\mathbbm{1}_{\left\{#1\right\}}}
\newcommand{\Ept}[1]{\mathbb{E}\left(#1\right)}
\newcommand{\steq}[1]{\stackrel{\text{\rm #1.}}{=}}

\def\cal{\mathcal}
\def\eps{\varepsilon}

\setcounter{tocdepth}{1}

\def\var{\mathrm{var}}
\def\card{\mathrm{card}}
\def\Nun{{\cal N}^{U,N}}

\title{A Large Scale Analysis of Unreliable Stochastic Networks}
\author{Reza Aghajani}
\address[R. Aghajani]{Department of Mathematics, University of California San Diego, 9500 Gilman Drive, La Jolla, CA  92093, USA.}
\email{maghajani@ucsd.edu}

\address[Ph. Robert, W. Sun]{INRIA Paris, 2 rue Simone Iff, F-75012 Paris, France}
\author{Philippe  Robert}
\email{Philippe.Robert@inria.fr}
\urladdr{http://team.inria.fr/rap/robert}
\author{Wen Sun}\thanks{The author's work is supported by a public grant overseen by the French National Research Agency (ANR) as part of the ``Investissements d'Avenir'' program (reference: ANR-10-LABX-0098).}
\email{Wen.Sun@inria.fr}

\date{\today}
\keywords{Stochastic Networks with Failures; Mean-Field Models; Nonlinear Markov processes. Reliability}
\subjclass[2010]{Primary: 60J27,60K25; Secondary: 68M15}

\begin{document}

\begin{abstract}
The problem of reliability of a large distributed system is analyzed  via a new mathematical model. A typical framework is a system where a set of files are duplicated on several data servers.  When one of these servers breaks down, all copies of files stored on it are lost. In this way, repeated failures may lead to losses of files.  The efficiency of such a network is directly related to the performances of the mechanism used to duplicate files on servers. In this paper we study the evolution of the network using a natural duplication policy giving priority to the files with the least number of copies. 

We investigate the asymptotic behavior of the network when the number $N$ of servers is large.  The analysis is complicated by the  large dimension of the state space of the empirical distribution of the state of the network.  A stochastic model of the evolution of the network which has values in state space whose dimension does not depend on $N$ is introduced. Despite this description does not have the Markov property,  it turns out that it is converging in distribution, when the number of nodes goes to infinity, to a nonlinear Markov process.  The rate of decay of the network,  which is the key characteristic of interest  of these systems, can be expressed in terms of this asymptotic process.  The corresponding mean-field convergence results are established.   A lower bound on the exponential decay, with respect to time, of the fraction of the number of initial files with at least one copy is obtained.
\end{abstract}

\maketitle

\vspace{-5mm}

\bigskip

\hrule

\vspace{-3mm}

\tableofcontents

\vspace{-1cm}

\hrule

\bigskip

\section{Introduction}
The problem of reliability of a large distributed system is analyzed in the present paper via a new mathematical model. A typical framework is a system where files are duplicated on several data servers.  When a server breaks down, all copies of files stored on this server are lost but they can be retrieved if copies of the same files are stored on other servers. In the case when no other copy of a given file is present in the network, it is definitively lost. Failures of disks occur naturally in this context, these events are quite rare but, given the large number of nodes of these large systems, this is not a negligible phenomenon at all at network scale. See the measurements at Google in Pinheiro et al.~\cite{Goog}  for example.

In order to maintain copies on distant servers, a fraction of the bandwidth of each server has to be devoted to the duplication mechanism of its files to other servers. If, for a short period of time, several of the servers break down, it may happen that files will be lost for good just because all available copies were on these servers and that the recovery procedure was not completed before the last copy disappeared.  The natural critical parameters of such a distributed system with $N$ servers are the failure rate $\mu$ of servers, the bandwidth $\lambda$ allocated to duplication of a given server, and the total number of initial files $F_N$. The quantity $\lambda$ represents the amount of capacity that a server allocates to make duplication to enhance the durability of the network. If there are initially too many files in the system, the duplication capacity at each node may not be able to cope with the losses due to successive failures of servers and, therefore, a significant fraction of files will be lost very quickly. An efficient storage system should be able to maximize both the average number of files $\beta{=}F_N/N$  per server and  the durability, i.e.\   the first instant $T^N(\delta)$ when a fraction $\delta\in(0,1)$ of files which are definitely lost.

\subsection{Models with Independent  Losses of Copies and Global Duplication Capacity}
A large body of  the work in computer science in this domain has been devoted to the design and the implementation of duplication algorithms. These systems are known as distributed hash tables (DHT) which refer to the data structures used to manage these systems. They play an important role in the development of some large scale distributed systems  right now, in cloud computing for example, see Rhea et al.~\cite{rhea-05} and Rowstron and Druschel~\cite{rowstron-01} for example. Except extensive simulations, little has been done to evaluate the performances of these algorithms concerning the durability of the system. 

Several approaches have been used to investigate the corresponding mathematical models.  Simplified models using finite  birth and death processes have been often used,  see Chun et al.~\cite{chun-06}, Picconi et al.~\cite{picconi-07} and Ramabhadran and Pasquale~\cite{Rama}.  In Feuillet and Robert~\cite{Feuillet-Robert} and Sun et al.~\cite{SFR}, the authors studied how the durability $T(\delta)$ scales with the number of servers $N$ and the maximum number of copies $d$ of each file,  under simplifying assumptions on file losses and the duplication mechanism.  For this later work, each  copy of a file is assumed to be lost at a certain fixed rate, independently of the other copies of files. Secondly, they assumed that the duplication capacity can be used globally. This means that,  if each of $N$ servers has an available bandwidth $\lambda$ to duplicate files, the total capacity for duplication, $\lambda N$, can be used to create a copy of any file  in the system to another server. With these assumptions, the mathematical representation of the network is significantly simplified because it is not necessary to know the locations of copies of files to derive the dynamics of the system. In particular, in ~\cite{SFR}, a Markovian model with a fixed state space of dimension $d{+}1$ has been investigated: if for $0\leq i\leq d$ and $t\geq 0$, $X_i^N(t)$ is the number of files with $i$ copies, then the vector $X^N(t)=(X_0^N(t),X_1^N(t),\ldots,X_{d}^N(t))$ is a Markov process on $\N^{d+1}$ under the hypothesis that the global capacity $\lambda N$ is devoted to a file with the least number of copies.  They have shown  that the durability $T^N(\delta)$ is of the order $N^{d-1}$ for a large $N$ under certain conditions.  Limit theorems were established for the rescaled process $(X^N(N^{d-1}t))$ using various technical estimations. In Sun et al.~\cite{SSMRS} the impact of placement policies, i.e.\ policies determining the location of the node to make a copy of a given file, is investigated.

\subsection{Stochastic Models with Local Duplication Features}
In this paper, we consider a more realistic stochastic model for these systems, dropping the two main simplifying assumptions of previous works on copy losses and duplication capacity. 
\renewcommand{\theenumi}{\roman{enumi}}
\begin{enumerate}
  \item {\em Simultaneous losses due to server failures.} Each server can fail with a constant rate $\mu$, and independently  of the other servers. When a server fails, all copies on that server are lost simultaneously, and therefore, the copy losses are not independent anymore. This dependency and bursty losses of file copies has a crucial effect on system performance. It is assumed that a new, empty, server replaces a failed server so that the total number of nodes is constant. 

  \item {\em Local duplication capacity.} The duplication capacity is assumed to be local, that is, each server has a capacity $\lambda$ to duplicate the copies  of files present on that server. In particular, this capacity cannot be used to copy files of other servers, as it is case for models with a global duplication capacity.
  \item {\em Duplication Policy: Priority to files with smallest number of copies}. The capacity of a server is allocated to duplicating one of its own files which has the smallest number of copies  alive in the network. See Section~3 of Sun at al.~\cite{SSMRS} for a quick description of how this kind of mechanism can be implemented in practice.  It is copied,  uniformly at random, onto one of the servers which does not have such a copy.  
\end{enumerate}
Without a duplication mechanism,  it is not difficult to see that the probability that a given file with $d$ initial copies has still at least one copy at time $t$ is of the order of $d\exp(-\mu t)$ for a large $t$, when $\mu$ is the failure rate of servers. If, initially, there are $\lfloor \beta N\rfloor$ files, all with $d$ copies scattered randomly in the network, the average fraction of files with at least a copy at time $t$ is thus of the order of  $\beta d\exp(-\mu t)$. The central question is how much a duplication mechanism can improve these (poor) performances.

One cannot expect, intuitively, that the average lifetime of a file will grow significantly with $N$ as in the case of a global duplication capacity, see Sun et al.~\cite{SFR}  where the decay occurs only on the ``fast'' time scale  $t\mapsto N^{d-1}t$. In contrast, as it will be seen, the decay of our system  occurs in fact on the ``normal'' time scale $t\mapsto t$. The main aim of this paper is of investigating the exponential decay rate of the fraction of the  number of files alive at time~$t$ with bounds of the form
\[
    Ce^{-\mu\kappa t}.
\]
Of course duplication is of interest only if $\kappa{<}1$ and in fact is as close to $0$ as possible. The goal of this paper is to investigate the decay of the system described above via a mean-field approach. This is the key problem of these systems in our view. 

With these assumptions, our mathematical model turns out to have stark differences compared to previous stochastic models. For a system of fixed size $N$, the exact dynamics of the system under above duplication mechanism is quite intricate, and hence, obtaining mathematical quantitative results to estimate the coefficient $\kappa$ is quite challenging. A natural approach is of studying the performance of the system when the number of servers $N$ goes to infinity. 

To illustrate in a simpler setting the difficulties of these models, we first consider the case where there are at most two copies of each file stored on the system ($d{=}2$). In this case, a Markovian representation of the state of the system can be given by
\begin{equation}\label{def_MarkovRep2}
    (X^N(t))=(X^N_{i,j}(t), i,j=0,1,...,N),
\end{equation}
where for $1{\leq} i{\not=j}{\leq} N$, $X^N_{i,j}(t)$ is the number of files which have copies on server $i$ and $j$ at time $t$, and $X^N_{i,0}(t){=}X^N_{0,i}(t)$ is the number of files having only one copy located on server $i$. The state space of the state of a given node is therefore of dimension of the order of $N$ which does not seem to be not amenable to analysis since the dimension of the basic state space is growing with $N$.

To overcome this difficulty, we introduce a reduced state representation in which each node $i$ is described by only two variables: the number of files whose unique copy is on server $i$ and the number of files with two copies and one of the copies is on $i$. The state of a node is then a two-dimensional state space. The empirical distribution associated with such a representation has values in a state space of probability distributions on $\N^2$.  This dimension reduction comes nevertheless at a price, the loss of the Markov property.  We prove that this non-Markovian description of the network is in fact converging in distribution, as $N$ goes to infinity, to {\em a nonlinear Markov process}, $(\overline{R}(t)){=}(\overline{R}_1(t),\overline{R}_2(t)){\in}\N^2$ satisfying the following Fokker-Planck Equations
\begin{multline}\label{FK}
    \frac{\diff}{\diff t} \Ept{\rule{0mm}{4mm}f(\overline{R}_1(t),\overline{R}_2(t))}=
    \lambda \Ept{\left(\rule{0mm}{4mm}f(\overline{R}(t){+}e_2{-}e_1)- f(\overline{R}(t))\right)\ind{\overline{R}_1(t)>0}}\\
    +\lambda \P(\overline{R}_1(t)>0)\Ept{\rule{0mm}{4mm}f(\overline{R}(t){+}e_2)- f(\overline{R}(t))}\\
    +\mu \Ept{\rule{0mm}{4mm}f(0,0)-f(\overline{R}(t))}
    +\mu \Ept{\left(\rule{0mm}{4mm}f(\overline{R}(t){+}e_1{-}e_2)- f(\overline{R}(t))\right)\overline{R}_2(t)},
\end{multline}
with $e_1{=}(1,0)$ and $e_2{=}(0,1)$, and $f$ is a function with finite support on $\N^2$. In this setting the asymptotic fraction of the number of files alive at time $t$ is given by $\E(\overline{R}_1(t)){+}\E(\overline{R}_2(t))/2$.

The asymptotic process $(\overline{R}(t)=(\overline{R}_1(t),\overline{R}_2(t))$ is a jump process with a type of jump, $x\mapsto x{+}e_2$ having  time-dependent and distribution-dependent rate given by $\lambda\P(\overline{R}_1(t){>}0)$ which is the nonlinear term of this evolution equation. 

\subsection{Rate of Convergence to Equilibrium}
It will be shown that the  non-time-homogeneous Markov process defined by~Relation~\eqref{FK}  is converging to the unique distribution $\pi{=}\delta_{(0,0)}$, the Dirac measure at $(0,0)$, corresponding to a system with all files lost. The decay of the network is thus closely related to the convergence rate to equilibrium of this Markov process.

As we have seen before, the problem is of finding  a constant $\kappa{>}0$ for which  the asymptotic fraction of the number of files alive at time $t$ has an exponential decay with parameter $\mu\kappa$, i.e.\ 
\begin{equation}\label{kap}
\E\left(\overline{R}_1(t)\right)+\frac{1}{2}\E\left(\overline{R}_2(t)\right)\leq \left(\overline{R}_1(0)+\frac{1}{2}\overline{R}_2(0)\right) e^{-\mu\kappa t}, \quad \forall t\geq 0.
\end{equation}
The convergence rate can be defined in terms of the {\em Wasserstein  distance} between the distribution $P_t$ of the distribution at time~$t$ and the equilibrium distribution~$\pi$, 
\[
{\cal W}_1(P_t,\pi)\steq{def} \inf\left\{\E(\diff(X,Y)): X{\steq{dist}}P_t, Y{\steq{dist}}\pi\right\},
\]
where $\diff(\cdot,\cdot)$ is some distance on the state space. One has  to find the best possible constant $\alpha$ such that the relation 
\begin{equation}\label{eqq1}
{\cal W}_1(P_t,\pi)\leq {\cal W}_1(P_0,\pi)e^{-\alpha t}
\end{equation}
holds for all $t{\geq}0$.

For {\em time-homogeneous}, i.e.\ ``standard'', Markov processes, this is already a difficult problem. For finite state spaces, tight estimates are known for some classical random walks, see~Aldous and Diaconis~\cite{Aldous} for example. When the state space is countable, results are more scarce. Lyapunov functions techniques to prove the existence of finite exponential moments of hitting times of finite sets can give a lower bound on the exponential decay $\alpha$. This is, in general, a quite rough estimate for $\alpha$, furthermore it does not give an estimate of the form~\eqref{eqq1}. See   Section~6.5 of Nummelin~\cite{Nummelin}, see also Chapter~15 of Meyn and Tweedie~\cite{Meyn}.

In the continuous case, i.e.\ with Brownian motions instead of Poisson processes, some functional  inequalities  have been successfully used to obtain Relations of the form~\eqref{eqq1}, see Markowich and Villani~\cite{Villani} and Desvillettes and Villani~\cite{Villani2} for surveys on this topic.  An extension of this approach for the case of the discrete state space turns out to be more difficult to do. Some generalizations  have been proposed by Caputo et al.~\cite{Caputo}, Joulin~\cite{Joulin} and Ollivier~\cite{Ollivier} for some jump processes. They have been used with some success, see Alfonsi et al.~\cite{Jourdain} and Thai~\cite{Thai} for example. For classical birth and death processes on $\N$,  it leads to  some quite specific (and strong)  conditions on the birth and death rates in order to have a positive exponential decay $\alpha$.  

For {\em non-linear} Markov processes, which is our case, the situation is, of course, much more complicated.  Recall that, in this context,  there may be  several invariant distributions, so that convergence to equilibrium is a more delicate notion. Note that this is not our case however.   Ideas using the functional  inequalities mentioned before have been also used but for specific stochastic models. See Carrillo et al.~\cite{Carrillo} and Cattiaux et al.~\cite{Cattiaux}  for a class of diffusions and Thai~\cite{Thai} for a class of  birth and death processes.  They do not seem of any help for the class of models we consider. To the best of our knowledge,  the results of optimality concerning the exponential decay $\alpha$ are quite scarce. Only lower  bounds are provided in general. For  the non-linear Markov processes associated to the mean-field results of this paper, our approach will mainly use some monotonicity properties  to derive lower bounds on the exponential decay. 

In a first step, the present paper develops a mathematical framework to get  a convenient  asymptotic description of our network, Equations~\eqref{FK}, and, secondly, to obtain explicit lower bounds on its exponential decay. This program is completed in the case $d{=}2$. In particular it is shown in Proposition~\ref{up-prop} of Section~\ref{Decay-Sec} that  Equation~\eqref{kap} holds some a constant $\kappa{=}\kappa_2^+$. Note that, however, as it can be expected in such a complicated context, we are not able to show that the constant $\kappa_2^+$ is optimal.  As it will be seen, the case $d{>}2$ is more awkward in terms of an asymptotic picture, but results on the exponential decay of the network can be nevertheless  obtained  by studying a non-linear Markov process dominating, in some sense, the original Markov process. 

\subsection*{Outline of the Paper}
Section~\ref{Mod-sec} introduces the main evolution equations of the state of the network. Section~\ref{McV-sec} investigates the existence and uniqueness properties of a nonlinear Markov process, the main result is Theorem~\ref{McKeanTheo}. This process turns out to be the limit of a reduced description of the network. Section \ref{MF-sec} shows the mean-field convergence of the reduced description of the network to this asymptotic process, this is Theorem \ref{MF-Theo}. Section \ref{Decay-Sec} studies the asymptotic behavior of the nonlinear Markov process. A lower bound on the exponential decay, with respect to time, of the fraction of the number of initial files still alive at time $t$ is derived.  These results are obtained in the case when the maximal number of copies for a given file is $2$.  Section~\ref{Ext-Sec}  investigates the case of a general $d$. By using a simpler stochastic model, for which a mean-field limit result also holds, a multi-dimensional analogue of the set of equations~\eqref{FK} is introduced and analyzed.   It  gives a lower bound on the exponential decay of the number of files present in the network.  It is expressed as the maximal eigenvalue of a $d{\times}d$ matrix. The proofs of the main results rely essentially on several ingredients: careful stochastic calculus with marked Poisson processes, several technical estimates, Lemmas~\ref{Tech_set} and~\ref{lem_bound}, and mean-field techniques.

\section{The Stochastic Model}\label{Mod-sec}
In this section, we describe the dynamics of our system. Recall that the system has $N$ servers, and until Section~\ref{Ext-Sec}, it is assumed that each file has at most two copies in the system. Recall the Markovian representation $(X^N(t))$ defined in \eqref{def_MarkovRep2}, that is,
\[
    X^N(t)=(X_{i,j}^N(t), 1\leq i, j\leq N),
\]
where, for $1{\leq} i{\leq} N$, $X_{i,0}^N(t)$ is the number of files whose unique copy is located in server~$i$ at time $t$,  and $X_{i,j}^N(t)$ is the number of files with a copy on  server $i$ and on server $j$, $1{\leq} j{\leq} N$, $j{\not=}i$. Note the symmetry $X_{i,j}^N(t){=}X_{j,i}^N(t)$, and by convention, $X_{i,i}^N(\cdot){\equiv}0$. We assume that all files have initially two copies and are randomly scattered on the network, as described below.\\

\begin{assumption}\label{asm_initial}
(Initial State) For  $1{\leq}i{\leq}N$, there are $A_i$ files on server $i$ and each file $\ell{=}1$,\ldots ,$A_i$ has another copy on server $V_{i,\ell}$, where
\begin{itemize}
    \item $A_i$,$i{=}1$,\ldots,$N$, are i.i.d.\ square  integrable random variables on $\N$,
    \item For each $i$, $V_{i,\ell}^N,\ell{\geq}1$, are i.i.d.\ random variables with uniform distribution over  $\{1,\ldots,N\}{\setminus}\{i\}$.
\end{itemize}
Hence,  $X_{i,0}^N(0){=}0$ and
\[
    X_{i,j}^N(0)= \sum_{\ell=1}^{A_i} \ind{V_{i,\ell}^N=j}+\sum_{\ell=1}^{A_j} \ind{V_{j,\ell}^N=i}.
\]
\end{assumption}
The total number of initial files is therefore $ F_N{\steq{def}}  A_1{+}A_2{+}\cdots{+}A_{N}$, and the initial average load of the system is
\[\beta\steq{def}\lim_{N\to+\infty} \frac{F_N}{N}=\Ept{A_1}.\]
The initial mean number of copies of files per server is therefore $2\beta$.

The initial state described in Assumption~\ref{asm_initial} have two main properties. First, it is exchangeable, in the sense that the distribution of $X^N(0)$ is invariant under permutations of server indices, and second, the two copies of each file are uniformly distributed over all servers. Alternatively, one can also assume that the total number of files $F_N$ is a fixed number, without changing the results of the paper.

\subsection*{Transitions of the State Representation}

The transitions of the Markov process $(X^N(t))$ is governed by server failures and file duplications, as described below. Throughout this paper, $f(t-)$ denotes the left limit of a function $f$ at time $t$.

\begin{itemize}
    \item \textit{Server failure. } Each server $i$ breaks down after an exponential time with parameter $\mu$. At each breakdown, all  copies on server $i$ are lost, and the server restarts immediately but empty. It is in fact replaced by a new one.  If a breakdown happens at time $t$,
        \[
            \begin{cases}
                X_{i,j}^N(t)=0,\quad \text{for all } j=1,..,N, \\[2mm]
                X_{i,0}^N(t)=0,\\[2mm]
                X_{j,0}^N(t)=X_{j,0}^N(t{-}){+}X_{i,j}^N(t{-}),\quad \text{for all } j\neq i.
            \end{cases}
        \]

      \item \textit{Duplication.}   If there are files on server $i$ with only one copy (i.e.\ $X_{i,0}^N(t){>}0$), one of such files is copied at a rate $\lambda$ onto another server $j\in\{1,\ldots,N\}{\setminus}\{i\}$ chosen uniformly at random.  If the duplication is completed at time $t$,
        \[
            \begin{cases}
                X_{i,0}^N(t)=X_{i,0}^N(t{-}){-}1,\\[2mm]
                X_{i,j}^N(t)=X_{i,j}^N(t{-}){+}1.
            \end{cases}
        \]
\end{itemize}
Note that $(X^N(t))$ is a transient Markov process converging to the state with all coordinates being $0$ (all copies are lost).

\subsection*{Stochastic Evolution Equations}
We can describe the dynamics of $(X^N(t))$ using stochastic integrals with respect to Poisson processes. Throughout the paper, we use the following notations for Poisson processes.

\begin{itemize}
  \item ${\cal N}_{\xi}$ is a Poisson point process on $\R_+$ with parameter $\xi>0$ and $({\cal N}_{\xi,i})$ is an i.i.d.\ sequence of such processes.
  \item $\overline{{\cal N}}_{\xi}$ is a Poisson point process on $\R_+^2$ with intensity $\xi\diff t\diff h$ and $(\overline{{\cal N}}_{\xi,i})$ is an i.i.d.\ sequence of such processes.
  \item For $1{\leq} j {\leq} N$, the random variable ${\cal N}^{U,N}_{\lambda,j}{=}(t_n^j,U_n^j)$ is a marked Poisson process,  $(t_n^j)$ is a Poisson process on $\R_+$  with rate $\lambda$ and $(U_n^j)$ is an i.i.d.\ random variables with uniform distribution over $\{1,\ldots,N\}{\setminus}\{j\}$. In particular, for $1{\leq}i{\leq}N$, $(\Nun_{\lambda,j}(\cdot,\{i\}))$ is an i.i.d.\  sequence of Poisson processes with rate $\lambda/(N{-}1)$.
\end{itemize}
With a slight abuse of notation, we denote $\mathcal{N}_{\lambda,j}{\steq{def}} \Nun_{\lambda,j}(\cdot,\mathbb{N})$, which is a Poisson process with rate $\lambda.$ See Kingman~\cite{Kingman} and~\cite{Bremaud} for an introduction on ordinary and marked Poisson processes. All Poisson processes used are assumed to be independent.

For every $j{=}1,...,N$, failure times of server $j$ are given by the epoch times of a Poisson process $\mathcal{N}_{\mu,j}$. A  marked Poisson process $\Nun_{\lambda,j}$ captures duplications of files from server $j$ as follows: for $n{\geq}1$, at the $n$th event time $t_n^j$ of $\Nun_{\lambda,j}$, if $X_{j,0}^N(t_n^j{-}){>}0$, a file on server $j$ is copied onto the  server whose index is given by the mark $U_n^j$.

The process $(X^N(t))$ can then be characterized as the solution of the following system of  stochastic differential equations (SDEs): for $1{\leq} i, j{\leq} N$, $j{\not=}i$ and $t{\geq} 0$,
\begin{multline}\label{Xij_diff}
    \diff X_{i,j}^N(t)= -X_{i,j}^N(t{-})\left[\rule{0mm}{4mm}{\cal N}_{\mu,i}(\diff t)+ {\cal N}_{\mu,j}(\diff t)\right]\\+ \ind{X_{j,0}^N(t{-})>0}\Nun_{\lambda,j}(\diff t,\{i\})  +\ind{X_{i,0}^N(t{-})>0}\Nun_{\lambda,i}(\diff t,\{j\}),
\end{multline}
and
\begin{multline}\label{Xi0_diff}
    \diff X_{i,0}^N(t)= -X_{i,0}^N(t{-}){\cal N}_{\mu,i}(\diff t)- \ind{X_{i,0}^N(t{-})>0}{\cal N}_{\lambda,i}(\diff t)
    \\+\sum_{j=1}^N X_{i,j}^N(t-){\cal N}_{\mu,j}(\diff t).
\end{multline}
Classical results on Poisson processes show that the process
\begin{equation*}
\left( \int_0^t \ind{X_{i,0}^N(s{-})>0}\Nun_{\lambda,i}(\diff s, \{j\})-\frac{\lambda}{N-1} \int_0^t\ind{X_{i,0}^N(s)>0}\,\diff s\right)
\end{equation*}
is a martingale whose previsible increasing process is
\[
\left(\frac{\lambda}{N-1} \int_0^t\ind{X_{i,0}^N(s)>0}\,\diff s\right).
\]
See, for example, Section~4 of Chapter~IV of Rogers and Williams~\cite{Rogers}. Therefore, for $1{\leq} i{\not=}j{\leq} N$,
\begin{multline}\label{Xij}
    X_{i,j}^N(t)=X_{i,j}^N(0) -2\mu\int_0^t X_{i,j}^N(s)\,\diff s\\+ \frac{\lambda}{N{-}1}\int_0^t \left(\ind{X_{i,0}^N(s)>0}+\ind{X_{j,0}^N(s)>0}\right)\,\diff s+{\cal M}_{i,j}^N(t),
\end{multline}
and
\begin{multline}\label{Xi0}
    X_{i,0}^N(t)=X_{i,0}^N(0) -\mu\int_0^t X_{i,0}^N(s)\,\diff s-\lambda\int_0^t \ind{X_{i,0}^N(s)>0}\,\diff s
    \\+\mu\sum_{j=1}^N \int_0^t X_{i,j}^N(s)\,\diff s+{\cal M}_{i,0}^N(t),
\end{multline}
where $({\cal M}_{i,0}^N(t))$, $1{\leq} i{\leq} N$, and $({\cal M}_{i,j}^N(t))$, $1{\leq} i{<}j{\leq} N$, are local martingales with the respective  previsible increasing processes:
\begin{multline*}
 \croc{{\cal M}^N_{i,j}}(t)= 2\mu\int_0^t \left(X_{i,j}^N(s)\right)^2\,\diff s\\+\frac{\lambda}{N{-}1}\int_0^t \ind{X_{i,0}^N(s)>0}\,\diff s +\frac{\lambda}{N{-}1}\int_0^t \ind{X_{j,0}^N(s)>0}\,\diff s,
\end{multline*}
and
\begin{multline*}
    \croc{{\cal M}^N_{i,0}}(t)= \mu\int_0^t \left(X_{i,0}^N(s)\right)^2\,\diff s\\+\frac{\lambda}{N{-}1}\int_0^t \ind{X_{i,0}^N(s)>0}\,\diff s
    +\mu \int_0^t\sum_{j=1}^N \left(X_{i,j}^N(s)\right)^2\, \diff s.
\end{multline*}

\section{An Asymptotic Process} \label{McV-sec}
As mentioned in the introduction, for $1{\leq}i{\leq}N$, the state of each server $i$ at time~$t$ can alternatively be described by the pair 
\begin{equation}\label{defR}
R_i^N(t)=(R_{i,1}^{N}(t),R_{i,2}^{N}(t)),
\end{equation}
where $R_{i,1}^N(t)$ (resp.\  $R_{i,2}^N(t)$) is the number of files with one copy  (resp.\ two copies) at node $i$.  This reduced representation can be obtained from the full Markovian representation $(X^N(t))$ via
\[
    R_{i,1}^{N}(t)=X_{i,0}^N(t) \quad \text{ and }\quad R_{i,2}^N(t)=\sum_{j=1}^N X_{i,j}^N(t).
\]
Therefore, the evolution equations of $(R^N_i(t))$ can be deduced from the SDEs \eqref{Xij_diff} and~\eqref{Xi0_diff}:
\begin{multline}\label{eqR1}
  \diff R_{i,1}^N(t)  = - R_{i,1}^N(t-){\cal N}_{\mu,i}(\diff t) \\- \ind{R_{i,1}^N(t-)>0}{\cal N}_{\lambda,i}(\diff t) +\sum_{j\not=i}X_{i,j}^N(t-){\cal N}_{\mu,j}(\diff t)
\end{multline}
\begin{multline}\label{eqR2}
    \diff R_{i,2}^N(t)  = - R_{i,2}^N(t-){\cal N}_{\mu,i}(\diff t)+\ind{R_{i,1}^N(t-)>0}{\cal N}_{\lambda,i}(\diff t)
   \\ -\sum_{j\not=i}X_{i,j}^N(t-){\cal N}_{\mu,j}(\diff t)+\sum_{j\not=i}\ind{R^N_{j,1}(t-)>0}\Nun_{\lambda,j}(\diff t,\{i\}).
\end{multline}
The process $(R^N(t)){=}(R_{i}^N(t), 1{\leq} i{\leq} N)$ lives on a state space of dimension $2N$ instead of $N^2$. The process $(R^N(t))$ still captures the information on the decay of the system since, for example, the total  number of files which are still available in the network  at time $t$ can be expressed as
\[
\sum_{i=1}^NR_{i,1}^N(t) + \frac{1}{2} R_{i,2}^N(t). 
\]
 This dimension reduction comes at the price of the loss of the Markov property. The evolution equations of $(R^N_i(t))$ are not autonomous, they depend on the process $(X^N(t))$, and, consequently, the process $(R^N(t))$  does not have the Markov property. However, as it will be seen, the  limit in distribution of  $(R_{i,1}^N(t),R_{i,2}^N(t))$ turns out to be a nonlinear Markov process, or a so-called McKean-Vlasov process. See e.g.\ Sznitman~\cite{Sznitman}. In this section we characterize this limiting process, while the proof of convergence as $N$ goes to infinity is given in the next section.

\subsection*{An Intuitive Introduction of the Asymptotic Process}

The purpose of this section is only of motivating the asymptotic process;  rigorous arguments to establish the convergence results are given later. Fix some  $1{\leq}i{\leq}N$ and assume for the moment that $(R_{i,1}^N(t), R_{i,2}^N(t))$ is converging in distribution to a process $(\overline{R}_{1}(t),\overline{R}_{2}(t))$. Define the positive random measure
\[
    {\cal P}_i^N([0,t])\stackrel{\text{def.}}{=}\int_0^t \sum_{j\not=i}X_{i,j}^N(s-){\cal N}_{\mu,j}(\diff s).
\]
It will be shown later in Lemma~\ref{Tech_set} that for a fixed $1{\leq}i{\leq}N$, with high probability when $N$ is large, all the variables $(X_{i,j}^N(t),1{\leq}j{\leq}N$) are  either $0$ or $1$  on a fixed time interval. In particular,  ${\cal P}_i^N$ is asymptotically a counting process, i.e.\ an increasing process with jumps of size $1$, with compensator given by 
\[
    \mu \int_0^t \sum_{j\not=i}X_{i,j}^N(s)\,\diff s=\mu\int_0^t R_{i,2}^N(s)\,\diff s.
    \]
See Jacod~\cite{Jacod} or Kasahara and Watanabe~\cite{Kasahara} for example. 
The convergence in distribution of the process $(R_{i,2}^N(s))$ to  $(\overline{R}_{2}(s))$ and standard results on convergence of point processes give  that ${\cal P}_i^N$ converges to ${\cal P}^\infty$,  an inhomogeneous Poisson process with intensity $(\overline{R}_{2}(t))$ which can be represented as
\[
    {\cal P}^\infty( \diff  t) = \int_{\R_+}\ind{0\leq h\leq \overline{R}_{2}(t-)}\overline{{\cal N}}_{\mu}(\diff t, \diff h).
    \]
See e.g.\ Kasahara and Watanabe~\cite{Kasahara} and Brown~\cite{Brown}.  Recall  that $\overline{{\cal N}}_{\mu}$ is a Poisson process on $\R_+^2$ with intensity $\mu\diff t\diff h$ (see Section~\ref{Mod-sec}). By formally taking the limit on both sides of Equation~\eqref{eqR1} as $N$ gets large.  This yields that the process $(\overline{R}_{1}(t),\overline{R}_{2}(t))$ satisfies the relation
\begin{multline}\label{eqRb1}
    \diff \overline{R}_{1}(t)= - \overline{R}_{1}(t-){\cal N}_{\mu}(\diff t) - \ind{\overline{R}_{1}(t-)>0}{\cal N}_{\lambda}(\diff t)
\\    +\int_{\R_+} \ind{0\leq h\le\overline{R}_{2}(t-)}\overline{\cal N}_{\mu}(\diff t, \diff h).
\end{multline}
A similar work can be done with Equation~\eqref{eqR2}. Consider the counting measure
\[
    {\cal Q}_i^N([0,t])\stackrel{\text{def.}}{=}\int_0^t \sum_{j\not=i}\ind{R_{j,1}^N(s-)>0}{\cal N}_{\lambda,j}^{U,N}(\diff s, \{i\}),
\]
which has the compensator, see Jacod~\cite{Jacod},
\[
    \lambda \int_0^t \frac{1}{N-1} \sum_{j\not=i}\ind{R_{j,1}^N(s)>0} \diff s.
\]
Again formally, it follows from the asymptotic independence of different servers and the law of large numbers limit for the processes $(R_{j,1}^N(t))$  that
\[
    \lim_{N\to+\infty}\left(\frac{1}{N-1} \sum_{j\not=i}\ind{R_{j,1}^N(t)>0}\right)= \left(\P\left(\overline{R}_1(t)>0\right)\right),
\]
and therefore, ${\cal Q}_i^N$ converges in distribution to an inhomogeneous Poisson process with intensity $(\P(\overline{R}_1(s){>}0))$. Therefore, taking limit from both sides of Equation~\eqref{eqR2} as $N$ gets large. One obtains that the process $(\overline{R}_{2}(t))$ satisfies
\begin{multline}\label{eqRb2}
    \diff \overline{R}_{2}(t)= - \overline{R}_{2}(t-){\cal N}_{\mu}(\diff t) + \ind{\overline{R}_{1}(t-)>0}{\cal N}_{\lambda}(\diff t)  \\   - \int_{\R_+} \ind{0\le h\le \overline{R}_{2}(t-)}\overline{{\cal N}}_{\mu}(\diff t,\diff h) +\int_{\R_+} \ind{0\leq h\le\P(\overline{R}_{1}(t)>0)}\overline{{\cal N}}_{\lambda}(\diff t, \diff h).
\end{multline}
The first result establishes the existence and uniqueness of a stochastic process satisfying the SDEs~\eqref{eqRb1} and~\eqref{eqRb2}. For $T>0$, let ${\cal D}_T\stackrel{\text{def.}}{=}{\cal D}([0,T],\N^2)$ be the set of c\`{a}dl\`{a}g functions from $[0,T]$ to $\mathbb{N}^2$ and $d_T(\cdot,\cdot)$ denotes a distance associated with the Skorohod topology on ${\cal D}_T$; see Chapter~3 of Billingsley~\cite{Billingsley}.

\begin{theorem}(McKean-Vlasov Process)\label{McKeanTheo}
For every $(x,y)\in\N^2$, the equations
\begin{equation}\label{McKean}
\begin{cases}
   & \overline{R}_{1}(t) {=}\displaystyle x{-}\int_0^t \overline{R}_{1}(s{-}){\cal N}_{\mu}(\diff s){-}\int_0^t \ind{\overline{R}_{1}(s-)>0}{\cal N}_{\lambda}(\diff s)\\ & \hspace{40mm}  \displaystyle  {+} \iint_{[0,t]\times\R_+}\ind{0\le h\le\overline{R}_{2}(s-)}\overline{{\cal N}}_{\mu}(\diff s,\diff h ),\\[5mm]
  &  \overline{R}_{2}(t)\displaystyle{=}y{-}\int_0^t\overline{R}_{2}(s{-}){\cal N}_{\mu}(\diff s)- \iint_{[0,t]\times\R_+}\ind{0\le h\le \overline{R}_{2}(s-)}\overline{{\cal N}}_{\mu}(\diff s,\diff h)
    \\&\displaystyle {+}\int_0^t\ind{\overline{R}_{1}(s-)>0}{\cal N}_{\lambda}(\diff s){+}\iint_{[0,t]\times\R_+}\ind{0\le h\le \P(\overline{R}_{1}(s)>0)}\overline{{\cal N}}_{\lambda}(\diff s,\diff h).
\end{cases}
\end{equation}
have a unique solution $(\overline R_1(t),\overline R_2(t))$ in ${\cal D}_T$.
\end{theorem}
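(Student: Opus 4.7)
The plan is to convert the nonlinear equation \eqref{McKean} into a fixed-point problem on a space of deterministic intensities and solve it by a Picard iteration. Given any measurable $p\colon[0,T]\to[0,1]$, freeze the nonlinearity by replacing $\P(\overline{R}_{1}(s)>0)$ in \eqref{McKean} by $p(s)$. The resulting \emph{linear} SDE, driven by the Poisson processes ${\cal N}_{\mu},{\cal N}_{\lambda},\overline{\cal N}_{\mu},\overline{\cal N}_{\lambda}$, admits a unique pathwise solution $(\overline{R}_{1}^{p}(t),\overline{R}_{2}^{p}(t))$ constructed recursively between consecutive jump times of the drivers. The crucial \emph{a priori} bound is that $\overline{R}_{2}^{p}$ increases only through jumps of ${\cal N}_{\lambda}$ or of the strip $\{h\le p(s)\}$ of $\overline{\cal N}_{\lambda}$, each at rate at most $\lambda$, so $\overline{R}_{2}^{p}(t)$ is stochastically dominated by $y+\mathrm{Poisson}(2\lambda t)$; integrability of $\overline{R}_{1}^{p}$ on $[0,T]$ then follows from $\diff \overline{R}_1^p \le \int \ind{0\le h\le \overline{R}_2^p} \overline{\cal N}_\mu(\diff t,\diff h)$. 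Define
\[
\Phi(p)(t)\steq{def}\P\left(\overline{R}_{1}^{p}(t)>0\right),
\]
so that solutions of \eqref{McKean} are in one-to-one correspondence with fixed points of $\Phi$.

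To show that $\Phi$ contracts, couple $(\overline{R}^{p},\overline{R}^{p'})$ for two intensities $p,p'$ by using the \emph{same} Poisson drivers. The only place the drivers are used differently is the strip $\{h\le p(s)\}$ versus $\{h\le p'(s)\}$ in the integral against $\overline{\cal N}_{\lambda}$, which contributes mismatched jumps at rate $\lambda|p(s){-}p'(s)|$, each affecting $|\overline{R}_{2}^{p}{-}\overline{R}_{2}^{p'}|$ by one. For the remaining drivers, jumps of ${\cal N}_{\mu}$ simultaneously reset both coupled copies to $(0,0)$ and hence can only \emph{decrease} the coordinatewise difference; jumps of ${\cal N}_{\lambda}$ and of the $\overline{\cal N}_{\mu}$-strip $\{h\le\overline{R}_{2}(s-)\}$ perturb $|\overline{R}^{p}{-}\overline{R}^{p'}|$ by at most one unit at a rate controlled by the current difference, using the elementary bound
$\bigl|\ind{\overline{R}_{1}^{p}>0}-\ind{\overline{R}_{1}^{p'}>0}\bigr|\le\bigl|\overline{R}_{1}^{p}-\overline{R}_{1}^{p'}\bigr|$. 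Writing the SDEs for $\overline{R}^{p}-\overline{R}^{p'}$, taking absolute values and then expectations yields
\[
\Ept{\rule{0mm}{4mm}\left|\overline{R}_{1}^{p}(t){-}\overline{R}_{1}^{p'}(t)\right|+\left|\overline{R}_{2}^{p}(t){-}\overline{R}_{2}^{p'}(t)\right|}\le C\int_{0}^{t}|p(s){-}p'(s)|\,\diff s+C\int_{0}^{t}\Ept{\rule{0mm}{4mm}\left|\overline{R}_{1}^{p}(s){-}\overline{R}_{1}^{p'}(s)\right|+\left|\overline{R}_{2}^{p}(s){-}\overline{R}_{2}^{p'}(s)\right|}\,\diff s
\]
for some constant $C=C(\lambda,\mu)$; Gronwall's lemma then gives
\[
|\Phi(p)(t){-}\Phi(p')(t)|\le\P\left(\overline{R}_{1}^{p}(t)\ne\overline{R}_{1}^{p'}(t)\right)\le C'\int_{0}^{t}|p(s){-}p'(s)|\,\diff s,\qquad t\in[0,T].
\]

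Existence and uniqueness then follow by the standard Volterra/Picard argument: iterating $p_{n+1}=\Phi(p_{n})$ from $p_{0}\equiv 0$ gives $\|p_{n+1}{-}p_{n}\|_{L^{\infty}[0,T]}\le (C'T)^{n}/n!$ up to a uniform constant, so $(p_{n})$ is Cauchy in $L^{\infty}([0,T])$ and converges to a fixed point $p^{*}$; the associated pair $(\overline{R}_{1}^{p^{*}},\overline{R}_{2}^{p^{*}})$ belongs to ${\cal D}_{T}$ and solves \eqref{McKean}. Uniqueness follows by applying the same contraction to any two fixed points of $\Phi$, combined with pathwise uniqueness of the linear SDE for fixed $p$. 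The main obstacle is the coupling estimate of the second step: the large ``reset'' jumps of ${\cal N}_{\mu}$ do not fit into a naive Lipschitz framework and one must exploit their \emph{contractive} action on the coupled difference, and the indicator $\ind{\overline{R}_{1}>0}$ appearing in the duplication term is not Lipschitz, but the inequality $|\ind{\overline{R}_{1}^{p}>0}{-}\ind{\overline{R}_{1}^{p'}>0}|\le|\overline{R}_{1}^{p}{-}\overline{R}_{1}^{p'}|$ is just enough to close the Gronwall estimate and propagate contraction through the nonlinear term.
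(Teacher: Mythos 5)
Your argument is correct, and it reaches the conclusion by a genuinely different (and somewhat more elementary) route than the paper. The paper sets up the fixed point on the space ${\cal P}({\cal D}_T)$ of laws on path space: it defines a map $\Psi$ sending a candidate law $\pi$ to the law of the solution of the frozen equation with $\P(\overline{R}_1(s)>0)$ replaced by $\pi(r_1(s)>0)$, couples $R_{\pi_a}$ and $R_{\pi_b}$ through the same Poisson drivers, and contracts in the Wasserstein metric $\rho_T$ built from the uniform norm on ${\cal D}_T$, invoking completeness of $({\cal P}({\cal D}_T),W_T)$ for the Picard limit. You instead observe that the law enters the equation only through the scalar statistic $p(t)=\P(\overline{R}_1(t)>0)$, and set up the fixed point for the map $\Phi(p)(t)=\P(\overline{R}_1^p(t)>0)$ on $L^\infty([0,T];[0,1])$. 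The core estimate is the same in both proofs: common-noise coupling, the observation that the ${\cal N}_\mu$ resets are contractive, the integer-valued inequality $|\ind{a>0}-\ind{b>0}|\le|a-b|$, the symmetric-difference-of-strips bound for the $\overline{\cal N}_\mu$ and $\overline{\cal N}_\lambda$ integrals, and Gr\"onwall yielding a Volterra kernel whose iterates decay like $(C'T)^n/n!$. What your version buys is the avoidance of Wasserstein machinery on the Skorohod space (you only need completeness of $L^\infty$), at the price of having to state explicitly --- as you do --- that solutions of \eqref{McKean} correspond bijectively to fixed points of $\Phi$ via pathwise uniqueness of the frozen linear SDE; the paper's formulation is the more canonical one for general McKean--Vlasov equations, where the nonlinearity need not factor through a one-dimensional functional of the law, and it hands over the limit object directly in the form (an element of ${\cal P}({\cal D}_T)$) used in the mean-field convergence proof of Section~\ref{MF-sec}. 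Your a priori domination of $\overline{R}_2^p$ by $y+\mathrm{Poisson}(2\lambda t)$, needed to make the expectations in the Gr\"onwall loop finite, is a point the paper handles only implicitly, and it is good that you made it explicit.
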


The set of probability distributions on ${\cal D}_T$ is denoted as ${\cal P}({\cal D}_T)$. Theorem \ref{McKeanTheo} states that there exists a unique $\pi\steq{dist}(\overline R_1(t),\overline R_2(t))$ in ${\cal P}({\cal D}_T)$ which satisfies Equation~\eqref{McKean}. See Rogers and Williams~\cite{Rogers} for definitions of existence and uniqueness of  a solution. Note that the solution to Equation~\eqref{McKean} solves the Fokker-Planck Equation~\eqref{FK} of the introduction.

\begin{proof}
Define the uniform norm $\|\cdot\|_{\infty,T}$ on ${\cal D}_T$, if $f=(f_1,f_2)\in{\cal D}_T$,
\[
    \|f\|_{\infty,T}=\sup\{\|f(t)\|:0\leq t\leq T\}=\sup\{|f_1(t)|+|f_2(t)|:0\leq t\leq T\}.
\]
One can introduce the  {\em Wasserstein metrics} on ${\cal P}({\cal D}_T)$ as follows, for $\pi_1,\pi_2\in {\cal P}({\cal D}_T)$
\begin{align}
    W_T(\pi_1,\pi_2)&=\inf_{\pi \in {\cal C}_T(\pi_1,\pi_2)}\int_{\omega=(\omega_1,\omega_2)\in{\cal D}_T^2} \left[d_T(\omega_1,\omega_2)\wedge 1\right]\, \diff\pi(\omega),\\
    \rho_T(\pi_1,\pi_2)&=\inf_{\pi \in {\cal C}_T(\pi_1,\pi_2)}\int_{\omega=(\omega_1,\omega_2)\in{\cal D}_T^2} \left[\|\omega_1-\omega_2\|_{\infty,T}\wedge 1\right] \,\diff\pi(\omega),
\end{align}
where $a\wedge b=\min\{a,b\}$ for $a$, $b\in\R$ and  $ {\cal C}_T(\pi_1,\pi_2)$ is the subset of couplings of $\pi_1$ and $\pi_2$, i.e.\  the subset of ${\cal P}({\cal D}_T{\times}{\cal D}_T)$ whose first (resp. second) marginal is $\pi_1$ (resp. $\pi_2$). Since $({\cal D}_T,d_T)$ is separable and complete, the space $({\cal P}({\cal D}_T),W_T)$ is complete, which gives the topology of  convergence in distribution on ${\cal P}({\cal D}_T)$. Clearly, for any $\pi_1,\pi_2\in {\cal P}({\cal D}_T)$, one has the relation  $W_T(\pi_1,\pi_2)\le\rho_T(\pi_1,\pi_2)$.

Let $\Psi: ({\cal P}({\cal D}_T),W_T){\to} ({\cal P}({\cal D}_T),W_T)$ be the mapping that takes $\pi$ to the distribution $\Psi(\pi)$ of $R_\pi$, where $(R_\pi(t)){=}({R}_{\pi,1}(t),{R}_{\pi,2}(t))$ is the unique solution to the SDEs
\begin{multline*}
    {R}_{\pi,1}(t) {=}x\displaystyle -\int_0^t {R}_{\pi,1}(s-){\cal N}_{\mu}(\diff s){-} \int_0^t\ind{{R}_{\pi,1}(s-)>0}{\cal N}_{\lambda}(\diff s)
\\    \displaystyle   + \iint_{[0,t]\times\R_+}\ind{0\le h\le{R}_{\pi,2}(s-)}\overline{{\cal N}}_{\mu}(\diff s,\diff h ),
\end{multline*}
\begin{multline*}
  {R}_{\pi,2}(t)  \displaystyle{=}y-\int_0^t {R}_{\pi,2}(s{-}){\cal N}_{\mu}(\diff s) - \iint_{[0,t]\times \R_+}\ind{0\le h\le {R}_{\pi,2}(s-)}\overline{{\cal N}}_{\mu}(\diff s,\diff h) 
\\ {+} \int_0^t\ind{{R}_{\pi,1}(s-)>0}{\cal N}_{\lambda}(\diff s)     \displaystyle{+}\iint_{[0,t]\times\R_+}\ind{0\le h\le \pi(r_1(s)>0)}\overline{{\cal N}}_{\lambda}(\diff s,\diff h),
\end{multline*}
with initial condition $({R}_{\pi,1}(0),{R}_{\pi,2}(0)){=}(x,y)$. Note that
\[
    \pi(r_1(t)>0)=\int_{\omega=(r_1,r_2)\in{\cal D}_T} \ind{r_1(t)>0}\diff \pi(\omega).
\]
The existence and uniqueness of a solution to Equations~\eqref{McKean} is equivalent to the existence and uniqueness of a fixed point $\pi{=}\Psi(\pi)$.

For any $\pi_a,\pi_b\in {\cal P}({\cal D}_T)$ then, let $R_{\pi_a}$ and $R_{\pi_b}$ both be solutions to the equations of the display above driven by same Poisson processes. Therefore, the distribution of the pair $(R_{\pi_a}(t),R_{\pi_b}(t))$ is a coupling of $\Psi(\pi_a)$ and $\Psi(\pi_b)$, and hence,
\begin{equation}\label{rho}
    \rho_t(\Psi(\pi_a),\Psi(\pi_b))\leq \Ept{\|R_{\pi_a}-R_{\pi_b}\|_{\infty,t}}.
\end{equation}
For $t{\le} T$, using the definition of $R_{\pi_a}$ and $R_{\pi_b}$,
\begin{multline}\label{dif}
    \|R_{\pi_a}- R_{\pi_b}\|_{\infty,t}{=}\sup_{s\leq t}\left(\left|R_{\pi_a,1}(s)-R_{\pi_b,1}(s)\right|{+}\left|R_{\pi_a,2}(s)-R_{\pi_b,2}(s)\right|\right)\\
    {\le}  \int_0^t\left(\left|R_{\pi_a,1}(s{-}){-}R_{\pi_b,1}(s{-})\right|{+}\left|R_{\pi_a,2}(s{-}){-}R_{\pi_b,2}(s{-})\right|\right)\cal{N}_\mu(\diff s)\\
    {+} 2\int_0^t\left|\ind{R_{\pi_a,1}(s{-}){>}0}-\ind{R_{\pi_b,1}(s{-}){>}0}\right|\cal{N}_{\lambda}(\diff s)\\
    {+} 2\int_0^t\int_0^\infty\ind{ R_{\pi_a,2}(s{-})\wedge R_{\pi_b,2}(s{-}) \le h\le R_{\pi_a,2}(s{-})\vee R_{\pi_b,2}(s{-})}
    \overline{\cal{N}}_{\mu}(\diff s,\diff h)\\
    {+}\int_0^t\int_0^\infty\ind{ \pi_{a}(r_1(s){>}0)\wedge \pi_{b}(r_1(s){>}0)\le h\le \pi_{a}(r_1(s){>}0)\vee\pi_{b}(r_1(s){>}0)}
    \overline{\cal{N}}_{\lambda}(\diff s,\diff h).
\end{multline}
We bound the expected value of each of the terms of the right-hand side above. First, for $\ell{=}1$, $2$,
\begin{align}
    \Ept{\int_0^t\left|R_{\pi_a,\ell}(s{-}){-}R_{\pi_b,\ell}(s{-})\right|\cal{N}_\mu(\diff s)}& =
    \mu\Ept{\int_0^t\left|R_{\pi_a,\ell}(s){-}R_{\pi_b,\ell}(s)\right|\,\diff s} \tag{a} \\
    &\leq \mu\int_0^t\Ept{ \left\|R_{\pi_a}-R_{\pi_b}\right\|_{\infty,s}}\,\diff s. \notag 
\end{align}
For the second term on the right-hand side of \eqref{dif}, since $R_{\pi_a,1}(s)$ and $R_{\pi_b,1}(s)$ are integer valued,
\[
    \left|\ind{R_{\pi_a,1}(s){>}0}-\ind{R_{\pi_b,1}(s){>}0}\right|\leq |R_{\pi_a,1}(s)-R_{\pi_b,1}(s)|,
\]
and hence, using (a), we have the bound
\begin{multline}
  \Ept{\int_0^t\left|\ind{R_{\pi_a,1}(s{-}){>}0}-\ind{R_{\pi_b,1}(s{-}){>}0}\right|\cal{N}_{\lambda}(\diff s)} \\\leq \mu\int_0^t\Ept{\left\|R_{\pi_a}-R_{\pi_b}\right\|_{\infty,s}}\,\diff s.\tag{b}
\end{multline}
Similarly, for the third term on the right-hand side of \eqref{dif}, we have
\begin{multline}\tag{c}
    \Ept{\int_0^t\int_0^\infty\ind{ R_{\pi_a,2}(s{-})\wedge R_{\pi_b,2}(s{-}) \le h\le R_{\pi_a,2}(s{-})\vee R_{\pi_b,2}(s{-})}
    \overline{\cal{N}}_{\mu}(\diff s,\diff h)}\\
 =\mu \int_0^t\Ept{|R_{\pi_a,2}(s){-} R_{\pi_b,2}(s)|}\,\diff s 
\leq \mu\int_0^t\Ept{\left\|R_{\pi_a}{-} R_{\pi_b}\right\|_{\infty,s}}\,\diff s.
\end{multline}
Finally, for the last term on the right-hand side of \eqref{dif},
\begin{multline}
    \Ept{\int_0^t\int_0^\infty\ind{ \pi_{a}(r_1(s){>}0)\wedge \pi_{b}(r_1(s){>}0)\le h\le \pi_{a}(r_1(s){>}0)\vee\pi_{b}(r_1(s){>}0)}
    \overline{\cal{N}}_{\lambda}(\diff s,\diff h)}\\=
    \lambda \int_0^t\left|\pi_{a}(r_1(s){>}0){-} \pi_{b}(r_1(s){>}0)\right|\,\diff s.
    \tag{d}
\end{multline}
Note that for every coupling $\pi{\in}{\cal C}_T(\pi_a,\pi_b)$ of $\pi_a$ and $\pi_b$,
\begin{align*}
\int_0^t|\pi_{a}(r_1(s){>}0)&- \pi_{b}(r_1(s){>}0)\,\diff s \\& =
    \int_0^t\left|\pi\left((r^a,r^b):r_{1}^a(s){>}0\right){-}\pi\left((r^a,r^b): r_{1}^b(s){>}0\right)
    \right|\diff s \\ & \le \int_0^t\int_{\omega=(r^a,r^b)\in{\cal D}_T^2} |\ind{r_{1}^a(s){>}0}{-}\ind{r_{1}^b(s){>}0}| \pi(\diff \omega)\diff s\\
    &{\le}\int_0^t\int_{\omega=(r^a,r^b)\in{\cal D}_T^2} |r_{1}^a(s){-}r_{1}^b(s)|\land 1\,\pi(\diff \omega)\diff s.
\end{align*}
By taking the infimum among all the couplings of $\pi_a$ and $\pi_b$, we have
\[
    \int_0^t\left|\pi_{a}(r_1(s){>}0)- \pi_{b}(r_1(s){>}0)\right|\,\diff s\leq \int_0^t\rho_s(\pi_a,\pi_b)\diff s.\tag{e}
\]

Now, by combining  the estimates~(a),~(b),~(c),~(d), ~(e), we conclude
\begin{multline*}
    \Ept{\left(\|R_{\pi_a}- R_{\pi_b}\right\|_{\infty,t}}\\\leq
    (2\lambda+3\mu)\int_0^t\Ept{\left\|R_{\pi_a}-R_{\pi_b}\right\|_{\infty,s}}\,\diff s+\lambda\int_0^t\rho_s(\pi_a,\pi_b)\diff s,
\end{multline*}
Gr\"onwall's inequality then gives
\[
    \Ept{\left(\|R_{\pi_a}- R_{\pi_b}\right\|_{\infty,t}}\leq C_T\int_0^t\rho_s(\pi_a,\pi_b)\diff s, \quad\quad \forall t\in[0,T],
\]
with $C_T{=}\lambda \exp(2\lambda{+}3\mu)T$. Hence using ~\eqref{rho}, we have
\begin{equation}\label{uniq_temp}
    \rho_t(\Psi(\pi_a),\Psi(\pi_b))\leq C_T \int_0^t\rho_s(\pi_a,\pi_b)\diff s, \quad\quad \forall t\in[0,T].
\end{equation}

Uniqueness of the fixed point for the equation $\Psi(\pi){=}\pi$ follows immediately from \eqref{uniq_temp}. Also, a typical iterative argument proves the existence: pick any $\pi_0{\in} {\cal P}({\cal D}_T)$, and define the sequence $(\pi_n)$ inductively by $\pi_{n+1}{=}\Psi(\pi_n)$. It follows from Relation~\eqref{uniq_temp} that
\[
    W_T(\pi_{n+1},\pi_n)\leq \rho_T(\pi_{n+1},\pi_n)\leq \frac{(TC_T)^n}{n!} \int_0^T\rho_s(\pi_1,\pi_0)\diff s.
\]
The metric space $({\cal P}({\cal D}_T),W_T)$  is complete, and therefore the sequence $(\pi_n)$ converges. Since  $\Psi$ is continuous with respect to the Skorohod topology, its limit is necessarily a fixed point of $\Psi$. This completes the proof.
\end{proof}

\section{Mean-Field Limit} \label{MF-sec}
The empirical distribution $\Lambda^N(t)$ of $(R_{i}^N(t), 1{\leq} i{\leq} N)$ is defined by, for  $f$  a function on $\N^2$,
\[
    \Lambda^N(t)(f)=\frac{1}{N}\sum_{i=1}^N f(R_{i}^N(t))=\frac{1}{N}\sum_{i=1}^N f\left((R_{i,1}^N(t),R_{i,2}^N(t))\right).
\]
As it has already been remarked, at the beginning of Section~\ref{McV-sec}, the process  $(\Lambda^N(t))$ does not have the Markov property.  The goal of this section is to prove that the stochastic  process $(\Lambda^N(t))$ is converging in distribution as $N$ goes to infinity,  that is, for any function $f$ with finite support, the sequence of stochastic processes $(\Lambda^N(t)(f))$  converges in distribution. See Billingsley~\cite{Billingsley} and Dawson~\cite{Dawson}.

The main result of this section is the following theorem. 
\begin{theorem} (Mean-Field Convergence Theorem)\label{MF-Theo}
Suppose the process $(X^N(t))$ is initialized according to Assumption~\ref{asm_initial}. The sequence of empirical distribution process  $(\Lambda^N(t))$ converges in distribution to a process $(\Lambda(t)){\in}{\cal D}(\R_+,{\cal P}(\N^2))$ which is defined as follows:  for $f$ with finite support on $\N^2$,
\[
    \Lambda(t)(f)\stackrel{\text{def.}}{=} \Ept{f\left(\overline{R}_1(t),\overline{R}_2(t)\right)},
\]
where $(\overline{R}_1(t),\overline{R}_2(t))$ is  the unique solution of Equations~\eqref{McKean}.
Moreover, for any $p{\geq} 1$, the sequence of finite marginals $(R_{i,1}^N(t),R_{i,2}^N(t),1{\leq} i{\leq} p)$ converges in distribution to $((\overline{R}_{i,1}(t),\overline{R}_{i,2}(t)),1{\leq} i{\leq} p)$, where $(\overline{R}_{i,1}(t),\overline{R}_{i,2}(t))$ are  i.i.d.\ processes with the same distribution as $(\overline{R}_1(t),\overline{R}_2(t))$.
\end{theorem}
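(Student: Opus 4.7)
The plan is to follow the classical three-step mean-field convergence scheme: prove tightness of $(\Lambda^N(\cdot))_N$, identify every subsequential limit as a solution of the integrated Fokker--Planck equation~\eqref{FK}, and conclude with the uniqueness already supplied by Theorem~\ref{McKeanTheo}. The propagation-of-chaos claim for finite marginals then follows automatically from the exchangeability of the $N$-server dynamics together with the fact that the limit is deterministic.

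First, for any test function $f$ with finite support on $\N^2$, I would apply It\^o's formula to $f(R_i^N(\cdot))$ using the SDEs~\eqref{eqR1}--\eqref{eqR2} and average over $i$ to obtain a semi-martingale decomposition
\[
\Lambda^N(t)(f)=\Lambda^N(0)(f)+B^N_f(t)+M^N_f(t),
\]
in which $B^N_f$ is a finite-variation process controlled by quantities of the form $\int_0^t\Lambda^N(s)(g)\,\diff s$ for auxiliary finite-support test functions $g$, and $M^N_f$ is a local martingale whose predictable bracket is of order $1/N$ (the moment control comes from Assumption~\ref{asm_initial}). The Aldous--Rebolledo criterion then yields tightness of each coordinate $(\Lambda^N(\cdot)(f))$ in $\mathcal{D}([0,T],\R)$; since finite-support functions form a separating class on $\mathcal{P}(\N^2)$, this upgrades to tightness of $(\Lambda^N(\cdot))$ in $\mathcal{D}([0,T],\mathcal{P}(\N^2))$.

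Next, I would extract a subsequential limit $\Lambda^\infty$ and pass to the limit term by term in the decomposition. The purely local contributions (failure of server $i$ and internal duplication at rate $\lambda\ind{R_{i,1}^N>0}$) converge directly to the first, second and third lines of~\eqref{FK}. Two external compensators require more care. The loss of a $2$-copy file because the partner server $j\neq i$ fails contributes $\sum_{j\neq i}X^N_{i,j}(s-)\cal{N}_{\mu,j}(\diff s)$, whose compensator, once the integrand is known to be $\{0,1\}$-valued, equals $\mu R^N_{i,2}(s)\diff s$ and yields the last line of~\eqref{FK}. The arrival of a duplicate from some $j\neq i$ contributes the compensator $(\lambda/(N{-}1))\sum_{j\neq i}\ind{R^N_{j,1}(s)>0}\diff s$, which by exchangeability and convergence of $\Lambda^N$ tends to $\lambda\,\P(\overline{R}_1(s)>0)\diff s$ and produces the nonlinear term of~\eqref{FK}.

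The main obstacle, I expect, will be rigorously justifying the identification of the external loss compensator: the variable $X^N_{i,j}(t)$ is a priori unbounded, and if it exceeds $1$ the corresponding jump of $R^N_{i,2}$ is larger than one, a behaviour not captured by~\eqref{eqRb2}. This is precisely where Lemma~\ref{Tech_set} is invoked to guarantee that, for a fixed $i$ and on any bounded time interval, the event $\{\exists j:X^N_{i,j}(t)\ge 2\}$ has vanishing probability as $N\to\infty$. A second subtlety is the passage to the limit in $\Lambda^N(\ind{r_1>0})$: since $r\mapsto\ind{r_1>0}$ does not have finite support on $\N^2$, one must approximate it by truncations and use the uniform moment estimates of Lemma~\ref{lem_bound} to control the tails uniformly in $N$. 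Once these two points are settled, $\Lambda^\infty$ satisfies the integrated form of~\eqref{FK}, so Theorem~\ref{McKeanTheo} forces $\Lambda^\infty(t)=\mathrm{Law}(\overline{R}_1(t),\overline{R}_2(t))$ and proves $\Lambda^N\Rightarrow\Lambda$. The propagation-of-chaos statement on $p$-marginals is then immediate from Sznitman's classical equivalence, for an exchangeable sequence, between convergence of the empirical measure to a deterministic limit and asymptotic independence of any finite subfamily.
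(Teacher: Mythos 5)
Your overall architecture (semimartingale decomposition of $\Lambda^N(\cdot)(f)$, vanishing of the martingale and of the error terms via Lemma~\ref{Tech_set}, tightness, identification of limit points, and Sznitman's equivalence for the $p$-marginals) matches the paper's proof closely, and your two flagged subtleties --- the need for $X^N_{i,j}\in\{0,1\}$ outside an event of probability $O(1/N)$, and the care required for the non-finitely-supported function $\ind{r_1>0}$ --- are exactly the points the paper handles. (Minor misattribution: the uniform-in-$N$ tail control for the truncation of $\ind{r_1>0}$ should come from the second-moment bound~\eqref{ll1} of Lemma~\ref{Tech_set}, not from Lemma~\ref{lem_bound}, which concerns the limiting deterministic measure.)

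There is, however, one genuine gap in your concluding step. You claim that once a subsequential limit $\Lambda^\infty$ is shown to satisfy the integrated Fokker--Planck equation~\eqref{FK}, ``Theorem~\ref{McKeanTheo} forces'' $\Lambda^\infty(t)=\mathrm{Law}(\overline R_1(t),\overline R_2(t))$. Theorem~\ref{McKeanTheo} asserts existence and uniqueness of the solution of the nonlinear SDE~\eqref{McKean}, i.e.\ uniqueness of a fixed point in ${\cal P}({\cal D}_T)$ at the level of path-space laws. What you actually need is uniqueness of solutions of the measure-valued evolution equation~\eqref{WeakLimit}: a priori, a flow $(\Lambda^\infty(t))$ solving~\eqref{FK} need not be the marginal flow of any solution of~\eqref{McKean}, so SDE uniqueness does not by itself rule out extraneous solutions of the Fokker--Planck equation. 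This is not a formality here, because the generator contains the unbounded rate $\mu y$ (the jump $r\mapsto r+e_1-e_2$ at rate $\mu r_2$), so uniqueness for~\eqref{WeakLimit} is not automatic from bounded-coefficient arguments. The paper closes this gap with a separate argument (Proposition~\ref{UniqProp}): a total-variation Gr\"onwall estimate in which the unbounded term $y\,\Delta^{\pm}(f)$ is truncated at level $K$, the tail being controlled by the exponential-moment bound of Lemma~\ref{lem_bound}, and $K\to\infty$ is taken on successive short time intervals. You need either this uniqueness statement for~\eqref{WeakLimit}, or an alternative identification working directly at the path-space level (e.g.\ a coupling of $(R_i^N)$ with the solution of~\eqref{McKean}), before you may invoke Theorem~\ref{McKeanTheo} to conclude.
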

The last statement is the ``propagation of chaos'' property.

\subsection{Uniform Bound for $\mathbf{(R_i^N(t))}$}
We start with a technical result which will be used to establish mean-field convergence. It states that, uniformly on a compact time interval, the number of files with a copy at a given server $i$ is stochastically bounded  and that, with a high probability, all other servers have at most one file in common with server $i$. This is a key ingredient to prove that the non-Markovian process  $(R_{i,1}^N(t),(R_{i,2}^N(t))$ is converging in distribution to the nonlinear Markov process described in Theorem~\ref{McKeanTheo}.
\begin{lemma}\label{Tech_set}
If the initial state of the process $(X^N(t))$ is given by  Assumption~\ref{asm_initial}, for $1{\leq} i{\leq} N$ and $T{>}0$ then, for $i{\in}\N$,
\begin{equation}\label{ll1}
    \sup_{N\geq 1} \E\left(\sup_{0\leq t\leq T} \left(R_{i,1}^N(t)+R_{i,2}^N(t)\right)^2 \right)<+\infty
\end{equation}
and if
\[
    {\cal E_i}^N(T)\stackrel{\text{def.}}{=}\left\{\sup_{0\leq t\leq T,1\leq j\leq N} X_{i,j}^N(t)\geq 2\right\},
\]
then there exists a constant $C(T)$ independent of $i$ such that $\P\left({\cal E_i}^N(T)\right){\leq} C(T)/N$.
\end{lemma}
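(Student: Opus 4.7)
The plan is to exploit the fact that the total occupancy $R^N_i(t){:=}R_{i,1}^N(t){+}R_{i,2}^N(t)$ satisfies a one-dimensional, autonomous SDE, and then use compensator identities to derive both bounds. Summing~\eqref{eqR1} and~\eqref{eqR2}, the terms $\pm\ind{R_{i,1}^N(t-)>0}\,{\cal N}_{\lambda,i}(\diff t)$ and $\pm\sum_{j\neq i} X_{i,j}^N(t-)\,{\cal N}_{\mu,j}(\diff t)$ cancel, giving
\[
    \diff R^N_i(t) = -R^N_i(t-)\,{\cal N}_{\mu,i}(\diff t) + \sum_{j\neq i}\ind{R_{j,1}^N(t-)>0}\,{\cal N}^{U,N}_{\lambda,j}(\diff t,\{i\}).
\]
The first term produces only downward jumps (a reset to $0$ at each failure of server~$i$), so pathwise
\[
    \sup_{0\leq t\leq T} R^N_i(t) \leq R^N_i(0) + \Pi_i([0,T]),
\]
where $\Pi_i{:=}\sum_{j\neq i}{\cal N}^{U,N}_{\lambda,j}(\cdot,\{i\})$ is a Poisson process of rate $(N-1)\cdot\lambda/(N-1)=\lambda$ independent of the initial state.

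For \eqref{ll1} it then suffices to bound $\E[R^N_i(0)^2]$ uniformly in $N$. From Assumption~\ref{asm_initial},
\[
    R^N_i(0) = A_i + \sum_{j\neq i}\sum_{\ell=1}^{A_j}\ind{V_{j,\ell}^N=i},
\]
so $\E[R^N_i(0)]=2\E[A_1]$, and a direct second-moment computation using the mutual independence of $(A_j)$ and $(V_{j,\ell}^N)$ gives a bound depending only on $\E[A_1]$ and $\E[A_1^2]$, both finite by the square-integrability hypothesis on $A_1$. Combined with $\E[\Pi_i([0,T])^2]=\lambda T+\lambda^2T^2$, this yields \eqref{ll1}.

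For the bound on $\P({\cal E}_i^N(T))$, since each $X_{i,j}^N$ only has upward jumps of size one, one can write
\[
    {\cal E}_i^N(T) \subseteq \bigl\{\exists j:\ X_{i,j}^N(0)\geq 2\bigr\} \cup \bigl\{\exists j,\ 0<s\leq T:\ X_{i,j}^N(s-)=1,\ \Delta X_{i,j}^N(s)=1\bigr\}.
\]
The first event has probability at most $\tfrac12\sum_{j\neq i}\E[X_{i,j}^N(0)(X_{i,j}^N(0)-1)]$ (using $\P(X\geq 2)\leq \E[X(X-1)]/2$ for integer $X$), and a direct expansion using Assumption~\ref{asm_initial} makes this sum $O(1/N)$, since each ordered pair of initial ``contributions'' to $X_{i,j}^N(0)$ requires two independent uniform targets in $\{1,\ldots,N\}\setminus\{i\}$ to coincide. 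For the second event, \eqref{Xij_diff} shows that any upward jump of $X_{i,j}^N$ comes from ${\cal N}^{U,N}_{\lambda,j}(\cdot,\{i\})$ or ${\cal N}^{U,N}_{\lambda,i}(\cdot,\{j\})$, each of intensity $\lambda/(N-1)$. Using the compensators of these Poisson processes and restricting to indices with $X_{i,j}^N(s-)\geq 1$ yields
\[
    \E\Bigl[\#\bigl\{(j,s)\colon s\leq T,\ X_{i,j}^N(s-)\geq 1,\ \Delta X_{i,j}^N(s)=1\bigr\}\Bigr] \leq \frac{2\lambda}{N-1}\int_0^T \E[R_{i,2}^N(s)]\,\diff s,
\]
which is $O(1/N)$ by \eqref{ll1}. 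Combining the two bounds gives $\P({\cal E}_i^N(T))\leq C(T)/N$ for a constant $C(T)$ independent of $i$.

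The key structural insight is the cancellation that produces the autonomous equation for $R^N_i$; once this is identified, the remaining work reduces to routine compensator estimates and a union bound on a combinatorial event whose $O(1/N)$ scaling follows directly from the independent uniform random targets of Assumption~\ref{asm_initial}. No substantive obstacle is anticipated; the most delicate step is keeping track of indices in the initial-state computation.
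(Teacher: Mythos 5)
Your proof is correct. The first part coincides with the paper's argument: the paper also bounds $\sup_{t\leq T}(R_{i,1}^N(t)+R_{i,2}^N(t))$ by the initial load $D_{i,0}^N=A_i+\sum_{j\neq i}\sum_{\ell\leq A_j}\ind{V_{j,\ell}^N=i}$ plus the total number of arrivals $\sum_{j\neq i}{\cal N}^{U,N}_{\lambda,j}(T,\{i\})$, a rate-$\lambda$ Poisson variable; your derivation of this bound via the cancellation in the summed SDE is just a more explicit way of seeing the same pathwise inequality. For the bound on $\P({\cal E}_i^N(T))$, however, you take a genuinely different route. The paper's argument is static and combinatorial: it merges the initial placement targets $V_{i,\ell}^N$ and the marks $U_n^i$ of the duplication processes into a single i.i.d.\ uniform sequence $(Z_{i,\ell}^N)$, observes that ${\cal E}_i^N(T)$ forces two of these ``arrows'' between $i$ and some $j$ to coincide, and applies a union bound over all pairs, each contributing $1/(N-1)^2$. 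You instead split the event dynamically into ``some $X_{i,j}^N(0)\geq 2$'' (handled by the second-factorial-moment bound $\P(X\geq2)\leq\E[X(X-1)]/2$ and the same independence of uniform targets) and ``an upward jump occurs from a state with $X_{i,j}^N\geq 1$'' (handled by integrating the compensator $2\lambda/(N-1)$ against $\sum_j\ind{X_{i,j}^N(s)\geq1}\leq R_{i,2}^N(s)$ and invoking the moment bound from the first part). Both yield $O(1/N)$ with constants independent of $i$. Your compensator argument is arguably cleaner in that it only charges duplications that actually land on an already-shared pair, rather than enumerating all potential duplication targets up to time $T$; the paper's version has the advantage of reducing everything to one elementary balls-in-bins computation and of not relying on the moment estimate of the first part. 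Either way the conclusion and the order in $N$ are the same, so I see no gap.
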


\begin{proof}
For $i{=}1,...,N,$  the total number of files $D_{i,0}^N$ initially on server $i$ satisfies
\begin{equation}\label{not1}
    D_{i,0}^N\stackrel{\text{def.}}{=}R_{i,1}^N(0){+}R_{i,2}^N(0)=X^N_{i,0}(0)+\sum_{j\not=i}^N X_{i,j}^N(0)= A_i{+}\sum_{j\not=i}^N \sum_{\ell=1}^{A_j} \ind{V_{j,\ell}^N=i},
\end{equation}
and hence, $\Ept{D_{i,0}^N}{=}2\Ept{A_1}$ and $\var(D_{i,0}^N){=} 2\var(A_1){+}\Ept{A_1}{N}/{(N{-}1)}$. Also, the total number of files $D_{i,1}^N(t)$ copied on server $i$ from all other servers during the interval $[0,t]$ verifies
\begin{equation}\label{not2}
    D_{i,1}^N(t)\stackrel{\text{def.}}{=}\sum_{j\not=i}^N \int_0^t \ind{R^N_{j,0}(s)>0}\Nun_{\lambda,j}(\diff s,\{i\})\leq \sum_{j\not=i}^N \Nun_{\lambda,j}(t,\{i\}).
\end{equation}
Therefore, for every $t{\leq} T$, $\Ept{D_{i,1}^N(t)}{\leq}\lambda T$ and $\Ept{D_{i,1}^N(t)^2}{\leq}2\lambda T.$ The bound \eqref{ll1} then follows from the inequality 
\[
\sup_{0\leq t\leq T}\left(R_{i,1}^N(t)+R_{i,2}^N(t)\right)\leq D_{i,0}^N(T)+D_{i,1}^N(T)
\]
For the next part, note that on ${\cal E_i}^N(T)$, there exists $1{\leq}j{\leq}N$ such that either server $i$ or $j$  makes two copies on the other one or both $i$ and $j$ make one copy on the other during the time interval $[0,T]$. Recall again that server $i$ initially copies $A_i$ files on other servers, and that the total number of files copied from server $i$ onto server $j$ during $(0,T]$ is upper bounded by $\Nun_{\lambda,i}(T,\{j\})$. Define the sequence  $(Z_{i,\ell}^N, 1{\leq} i{\leq} N, \ell{\geq}1)$ as follows: $Z_{i,\ell}^N{=}V^N_{i,\ell}$ when $1{\leq}\ell{\leq}A_i$, and $Z_{i,\ell}^N{=}U^i_{\ell-A_i}$ when $\ell{>}A_i$. For the first $A_i$ indices $\ell,$ $Z_{i,\ell}^N$s are therefore the indices of servers which received an initial copy of a file of server $i$, while the subsequent $Z_{i,\ell}^N$s are the server indices on which (potential) duplications from server $i$ can take place. $(Z_{i,\ell}^N)$ is therefore a sequence of i.i.d.\ random variables uniformly distributed on $\{1,\ldots,N\}{\setminus}\{i\}$. Therefore,  $\P\left({\cal E}_i^N(T)\right){\leq} \P\left({\cal B}_i^N\right)$, where
\begin{multline*}
    {\cal B}_i^N\stackrel{\text{def.}}{=}\bigcup_{j=1, j\not=i}^N \left( \bigcup_{\displaystyle\substack{1\leq \ell \leq A_i+L_i(T)\\ 1\leq \ell' \leq A_j+L_j(T)}}\hspace{-7mm}
    \{Z_{i,\ell}^N{=}j, Z_{j,\ell'}^N{=}i\}\right. \\ \ \hspace{-7mm}\left. \bigcup_{1\leq \ell\neq\ell' \leq A_i+L_i(T)} \hspace{-7mm}\{Z_{i,\ell}^N{=}j, Z_{i,\ell'}^N=j\}\hspace{-7mm}\bigcup_{1\leq \ell\neq\ell' \leq A_j+L_j(T)} \hspace{-7mm}\left\{V_{j,\ell}^N{=}i, V_{j,\ell'}^N{=}i\right\}\right),
\end{multline*}
with $L_i(T){=}{\cal N}_{\lambda,i}([0,T]){+}A_i$. Since the probability of  each of the elementary events of the right hand side of this relation is $1/(N{-}1)^2$, $Z_{k,\ell}$s are independent of $L_{k'}(T)$ for all $k,k'$, and $\Ept{L_i(T)}{=}\lambda T{+}\Ept{A_1}$. It is then easy to conclude.
\end{proof}

\subsection{Evolution Equations for the Empirical Distribution}
Denote $e_1{=}(1,0)$ and $e_2{=}(0,1)$, and define the operators
\begin{align*}
    \Delta^{\pm}(f)(x) &{=} f(x{+}e_1{-}e_2){-}f(x),\quad
    \Delta^{\mp}(f)(x) {=} f(x{-}e_1{+}e_2){-}f(x),\\
    \Delta^{+}_2(f)(x)& {=} f(x{+}e_2){-}f(x),
\end{align*}
for $x{\in}\N^2$ and $f{:}\N^2{\to}\R_+$. For every function $f{:}\N^2{\to} \R_+$ with finite support, it follows from Equations~\eqref{eqR1} and~\eqref{eqR2} and using martingale decomposition for the Poisson processes, we have
\begin{multline}\label{SDEi}
    \diff f(R_{i}^N(t))= \diff {\cal M}^N_{f,i}(t) + \Delta^{\mp}(f)(R_{i}^N(t))\ind{R_{i,1}^N(t)>0}\lambda\diff t \\
    {+}\Delta_2^+(f)(R_{i}^N(t))\frac{\lambda }{N{-}1} \sum_{j\not=i} \ind{R_{j,1}^N(t)>0}\diff t
    +  \left[f(0,0){-}f(R_{i}^N(t))\right]\mu\diff t \\
    + \sum_{j\neq i} \left[\rule{0mm}{4mm}f(R_{i}^N(t)+X_{i,j}^N(t)(e_1{-}e_2)){-}f(R_{i}^N(t))\right]\mu\diff t,
\end{multline}
where ${\cal M}^N_{f,i}$  is a martingale. The $j$th term of the last sum in on the right-hand side above corresponds to the event when server $j$ breaks down and therefore the copies of $X_{i,j}^N(t)$ files at node $j$ are lost, and the remaining copies are only located at node~$i$. Using the notation of Lemma~\ref{Tech_set} then, outside the event ${\cal E}_i^N(T)$, $X_{i,j}^N(t)$ is either 0 or 1, and hence, $t{\in}[0,T]$,
\[
    \sum_{j\neq i} \left[\rule{0mm}{4mm}f(R_{i}^N(t)+X_{i,j}^N(t)(e_1{-}e_2)){-}f(R_{i}^N(t))\right]= R_{2,i}^N(t)\Delta^{\pm}(f)(R_i^N(t)).
\]
By summing up both sides of Relation~\eqref{SDEi} over $i$ and denoting $\N^*{=}\N{\setminus}\{0\}$, we have
\begin{multline}\label{Emp1}
    \Lambda^N(t)(f){=} \Lambda^N(0)(f){+}{\cal M}^N_f(t) {+}  \lambda\int_0^t \int_{\N^2} \Delta^{\mp}(f)(x,y)\ind{x>0}\Lambda^N(s)(\diff x,\diff y)\diff s \\
    {+}\frac{\lambda N }{N{-}1}\int_0^t \Lambda^N(s)(\N^*{\times}\N) \int_{\N^2} \Delta_2^+(f)(x,y) \Lambda^N(s)(\diff x,\diff y)\diff s-H_1^N(t)\\
    + \mu \int_0^t \int_{\N^2} (f(0,0){-}f(x,y))\Lambda^N(s)(\diff x,\diff y)\diff s\\
    +\mu\int_0^t \int_{\N^2} y \Delta^{\pm}(f)(x,y)\,\Lambda^N(s)(\diff x,\diff y)\diff s +H_{2}^N(t),
\end{multline}
where 
\begin{align*}
    {\cal M}^N_f(t)&=\frac{1}{N}\left( {\cal M}^N_{f,1}(t)+{\cal M}^N_{f,2}(t)+\cdots+{\cal M}^N_{f,N}(t)\right),\\
    H_1^N(t)&=\frac{\lambda }{N{-}1}\int_0^t\int_{\N^2} \Delta_2^+(f)(x,y) \ind{x>0}\Lambda^N(s)(\diff x,\diff y)\diff s,\\
    H_2^N(t)&=\mu \frac{1}{N}\sum_{i=1}^N\int_0^t h_{2,i}^N(s)\,\diff s,
\end{align*}
with
\begin{multline*}
    h_{2,i}^N(t)=\sum_{j\neq i} \left(\rule{0mm}{4mm}f(R_{i}^N(t)+X_{ij}^N(t)(e_1{-}e_2)){-}f(R_{i}^N(t))\right)\\-\int_{\N^2} y \Delta^{\pm}(f)(x,y)\,\Lambda^N(t)(\diff x,\diff y).
\end{multline*}
Now, we investigate the asymptotic properties of the terms of the right hand side of  Equation~\eqref{Emp1}.

\subsection{The negligible terms} We first prove that the two processes $(H_1^N(t))$ and $(H_2^N(t))$  converge to zero in distribution as $N$ goes to infinity. For the former, the result follows immediately from the simple bound \[\|H_1^N\|_{\infty,T}\leq \frac{2\lambda T}{N}\|f\|_\infty.\]

For $(H_2^N(t))$, first note that, for $0{\leq} t{\leq} T$ and $1{\leq} i{\leq} N$, $h_{2,i}^N(t)$ is non-zero only on the event $\cal{E}^N_i(T)$, and hence,
\begin{align*}
   \left\|\frac{1}{N}\sum_{i=1}^Nh_{2,i}^N(s)\right\|_{\infty,T} \leq  \sup_{0\leq s\leq T}\frac{1}{N}\sum_{i=1}^N\left|\sum_{j\neq i} \left[ f(R_{i}^N(s){+}X_{ij}^N(s)(e_1{-}e_2)){-}f(R_{i}^N(s))\right] \right.\\\left. \rule{0mm}{6mm}{-}R_{2,i}^N(s)\Delta^{\pm}(f)(R_i^N(s))\right|    \leq 4\mu\|f\|_\infty \frac{1}{N}\sum_{i=1}^N|R_{2,i}^N(s)|_{\infty,T}\ind{\cal{E}^N_i(T)}.
\end{align*}
By an application of Cauchy-Schwartz inequality and using Lemma~\ref{Tech_set}, there exists a constant $C_1(T)$ such that
\[
    \frac{1}{N}\sum_{i=1}^N\Ept{|R_{2,i}^N|_{\infty,T} \ind{\cal{E}^N_i(T)}} \leq \frac{1}{N}\sum_{i=1}^N\sqrt{\Ept{|R_{2,i}^N|_{\infty,T}^2}}\sqrt{\P\left(\cal{E}^N_i(T)\right)}
     {\le}\frac{C_1(T)}{\sqrt{N}}.
\]
Consequently,
\[
    \lim_{N\to+\infty} \Ept{\sup_{0\leq t\leq T} \left|\frac{1}{N}\sum_{i=1}^N\int_0^t h_{2,i}^N(s)\,\diff s \right|}=0,
\]
which implies that the process $(H_2^N(t))$ is also vanishing in distribution.

\subsection{The Martingale}
Careful calculations show that the previsible increasing process of $({\cal M}^N_f(t))$ is given by
\[
    \left(\croc{{\cal M}^N_f}(t)\right){=} \left(\frac{\lambda}{N^2}G_1^N(t)+\frac{\mu}{N^2}G_2^N(t)\right),
\]
with 
\begin{multline*}
    G_1^N(t)=\sum_{i=1}^N  \int_0^t
    \left(\rule{0mm}{5mm} \Delta^{\mp}(f)(R_{i}^N(s)) {+}\frac{N}{N{-}1}\Lambda^N(s)(\Delta_2^{+}(f))\right.\\ \left. \rule{0mm}{5mm}{-}\frac{N}{N{-}1}\Delta_2^{+}(f)(R_i^N(t))\right)^2\ind{R_{i,1}^N(s-)>0}\,\diff s,
\end{multline*}
and
\begin{multline*}
    G_2^N(t)=\sum_{i=1}^N  \int_0^t\left(\rule{0mm}{5mm}
    f(0,0)-f(R_i^N(s))\right.  \\ \left.  \rule{0mm}{5mm}{+}\sum_{j\not=i} [f(R^N_j(s){+}X_{i,j}^N(s)(e_1{-}e_2)){-}f(R^N_j(s))]\ind{X_{ij}^N(s)>0}\right)^2\,\diff s.
\end{multline*}
From the simple bounds $\|G_1^N\|_{\infty,T} {\leq}  16\cdot NT\|f\|_{\infty}^2$  and
\[
\|G_2^N\|_{\infty,T}  \leq 8NT\|f\|_{\infty}^2\left(1{+} \|R_{i,2}^N\|_{\infty,T}^2\right),
\]
and, by using Relation~\eqref{ll1} of Lemma~\ref{Tech_set}, we get 
\[
    \lim_{N\to+\infty} \Ept{\croc{{\cal M}^N_f}(T)} =0.
\]
Therefore, by Doob's inequality, the martingale $(\cal{M}_f^N(t))$ converges to zero in distribution as $N$ goes to infinity.

\begin{prop}\label{tightprop}
(Tightness of the Empirical Distribution Process) The sequence  $(\Lambda^N(t))$  is tight with respect to the convergence in distribution in ${\cal D}(\R_+,{\cal P}(\N^2))$. Any limiting point $(\Lambda(t))$  is a continuous process which satisfies
\begin{multline}\label{WeakLimit}
    \Lambda(t)(f){=}\Lambda(0)(f){+}\lambda \int_0^t \int_{\N^2} \Delta^{\mp}(f)(x,y)\ind{x>0}\Lambda(s)(\diff x,\diff y)\diff s \\
    {+}\lambda\int_0^t \Lambda(s)(\N^*{\times}\N) \int_{\N^2} \Delta_2^+(f)(x,y) \Lambda(s)(\diff x,\diff y)\diff s\\
    + \mu \int_0^t \int_{\N^2} (f(0,0)-f(x,y))\Lambda(s)(\diff x,\diff y)\diff s\\
    +\mu\int_0^t \int_{\N^2} y \Delta^{\pm}(f)(x,y)\,\Lambda(s)(\diff x,\diff y)\diff s
\end{multline}
for every function $f$ with finite support on $\N^2$.
\end{prop}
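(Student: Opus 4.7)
The plan is threefold: first establish tightness of $(\Lambda^N(t))$ via an Aldous--Rebolledo criterion applied to the semimartingale decomposition \eqref{Emp1}, then show that the jumps are $O(1/N)$ so that any limit point is continuous, and finally pass to the limit in \eqref{Emp1} to identify the limit.

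For the first step, since $\N^2$ is countable and discrete, the class of bounded functions with finite support is convergence-determining on $\cal{P}(\N^2)$; following Dawson~\cite{Dawson}, tightness of $(\Lambda^N(t))$ in $\cal{D}(\R_+,\cal{P}(\N^2))$ reduces to tightness of the real processes $(\Lambda^N(\cdot)(f))$ in $\cal{D}(\R_+,\R)$ for each such $f$, plus a compact containment condition for $\{\Lambda^N(t)\}_{N\ge 1,\,t\le T}$ in $\cal{P}(\N^2)$. The latter follows from Lemma~\ref{Tech_set}: by exchangeability of the dynamics and of the initial state,
\[
\Ept{\int_{\N^2}(x{+}y)\,\Lambda^N(t)(\diff x,\diff y)}=\Ept{R_{1,1}^N(t){+}R_{1,2}^N(t)}
\]
is uniformly bounded on $[0,T]$, and Markov's inequality then yields a compact subset of $\cal{P}(\N^2)$ that is charged by $\Lambda^N(t)$ with arbitrarily large probability, uniformly in $t$ via the $L^2$ bound~\eqref{ll1}. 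For the one-dimensional tightness, I would apply the Aldous--Rebolledo criterion to the decomposition \eqref{Emp1}: the finite variation term has a time-derivative bounded by a constant depending only on $\lambda$, $\mu$ and $\|f\|_\infty$ (using that $\Lambda^N(s)(\N^*{\times}\N)\le 1$ and that $y\,\Delta^{\pm}(f)(x,y)$ is supported on the finite support of $f$), and the martingale bracket $\croc{\cal{M}_f^N}(T)$ vanishes in $L^1$ as already shown, so oscillations of $\Lambda^N(\cdot)(f)$ over small time intervals are uniformly small in probability.

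Next I would observe that every Poisson event changes the state $R_i^N$ of at most two servers, hence the jumps of $\Lambda^N(\cdot)(f)$ are bounded by $2\|f\|_\infty/N$. The maximal jump over $[0,T]$ therefore tends to zero, and by Theorem~13.4 of Billingsley~\cite{Billingsley} any weak limit point $(\Lambda(t))$ is concentrated on continuous paths. To identify the limit, I would fix a subsequence along which $(\Lambda^N)$ converges in distribution to some $(\Lambda(t))$, use Skorokhod's representation theorem to pass to almost sure convergence, and then upgrade to uniform convergence on compact intervals using continuity of the limit. In \eqref{Emp1}, the martingale $(\cal{M}_f^N(t))$ and the remainder terms $(H_1^N(t))$, $(H_2^N(t))$ have already been shown to vanish. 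Since $\N^2$ carries the discrete topology, every bounded function on it lies in $C_b(\N^2)$, so each integrand $g$ appearing in \eqref{Emp1} gives a weakly continuous map $\pi\mapsto\int_{\N^2}g\,\diff\pi$ on $\cal{P}(\N^2)$, and so does $\pi\mapsto\pi(\N^*{\times}\N)=1{-}\sum_y\pi(\{(0,y)\})$. Bounded convergence then permits passage to the limit inside the time integrals, yielding \eqref{WeakLimit}.

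The main obstacle I anticipate lies in the quadratic nonlinear term $\Lambda^N(s)(\N^*{\times}\N)\int\Delta_2^+(f)\,\diff\Lambda^N(s)$: its continuity as a functional of the whole path requires the convergence $\Lambda^N\to\Lambda$ to be uniform on compact intervals, which is only ensured because the limit turns out to be continuous; if the limit had jumps, the Skorokhod topology alone would not be enough to commute the product with the limit. A secondary technical point, which I would handle explicitly, is the apparent unboundedness of the integrand $y\,\Delta^{\pm}(f)(x,y)$ in the last drift term, which is in fact bounded because $\Delta^{\pm}(f)$ inherits the finite support of $f$ and forces $y$ to range over a finite set.
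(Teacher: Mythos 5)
Your overall architecture is sound and close to the paper's: both arguments rest on the decomposition \eqref{Emp1}, on the already-established vanishing of $(H_1^N)$, $(H_2^N)$ and $({\cal M}_f^N)$, and on the moment bounds of Lemma~\ref{Tech_set}; both reduce tightness in ${\cal D}(\R_+,{\cal P}(\N^2))$ to tightness of the real-valued processes $(\Lambda^N(\cdot)(f))$ via Dawson's criterion. Where you differ is the one-dimensional criterion: the paper verifies the modulus-of-continuity condition of Billingsley (Theorem~7.2), bounding each drift increment over an interval of length $\delta$ by $C\delta\|f\|_\infty$ (in expectation for the $y\,\Delta^{\pm}(f)$ term, via Lemma~\ref{Tech_set}), which yields C-tightness --- tightness together with continuity of all limit points --- in a single step; you use Aldous--Rebolledo plus a separate jump-size argument. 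Your route works, and you make explicit two points the paper leaves implicit, namely the compact containment condition and the passage to the limit in the nonlinear product term; your observation that $y\,\Delta^{\pm}(f)(x,y)$ is itself bounded with finite support even simplifies the estimate the paper obtains through Lemma~\ref{Tech_set}.

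One step of your continuity argument is, however, wrong as stated. It is not true that every Poisson event changes the state of at most two servers: when server $i$ fails, every server $j$ with $X_{i,j}^N(t-)>0$ sees $R_{j,1}^N$ increase and $R_{j,2}^N$ decrease by $X_{i,j}^N(t-)$, so the number of affected servers is of order $R_{i,2}^N(t-)$, which is not deterministically bounded. The jump of $\Lambda^N(\cdot)(f)$ at such an event is therefore only bounded by $2\|f\|_\infty\bigl(1+R_{i,2}^N(t-)\bigr)/N$, not by $2\|f\|_\infty/N$. The conclusion survives: by Relation~\eqref{ll1} and a union bound, $\P\bigl(\max_{i\leq N}\sup_{t\leq T}R_{i,2}^N(t)>N^{3/4}\bigr)\leq C N^{-1/2}$, so the maximal jump of $\Lambda^N(\cdot)(f)$ over $[0,T]$ still tends to $0$ in probability and every limit point is continuous. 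You should either insert this estimate or, as the paper does, obtain C-tightness directly from the modulus-of-continuity condition in the uniform norm, which makes the jump discussion unnecessary.
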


Note that the Fokker-Planck Equation~\eqref{FK} of the introduction  is the functional form of the stochastic Equation~\eqref{WeakLimit}.

\begin{proof}
Theorem~3.7.1 of Dawson~\cite{Dawson} states that it is enough to prove that, for any   function $f$ on $\N^2$ with finite support,  the sequence of processes $(\Lambda_N(\cdot)(f))$ is tight with respect to the topology of the uniform norm on compact sets. Using the criterion of the modulus of continuity (see e.g.\ Theorem 7.2, page~81 of Billingsley~\cite{Billingsley}), we need to show that for every $\eps{>}0$ and $\eta{>}0$, there exists a $\delta_0{>}0$ such that if $\delta{<}\delta_0$ then, 
\begin{equation}\label{tight_cond}
    \P\left(\sup_{\substack{0\leq s\leq t\leq T\\ |t-s|\leq \delta}}\left|\Lambda_N(t)(f)-\Lambda_N(s)(f)\right|\geq \eta\right)\leq \eps
\end{equation}
holds for all $N{\in}\N$. Fix $0{\leq} s, t{\leq} T$ with $|t{-}s|{\leq}\delta$, and remember the equality~\eqref{Emp1} for the process  $(\Lambda^N(t)(f))$. We have already shown that  the processes $(H_1^N(t))$, $(H_2^N(t))$, and $({\cal M}_f^N(t))$ vanish as $N$ goes to infinity. For the remaining terms on the right-hand side of \eqref{Emp1}, note that there exists a finite  constant $C_0$ such that 
\[\left|\int_s^t \int_{\N^2} \Delta^{\mp}(f)(x,y)\ind{x>0}\Lambda^N(u)(\diff x,\diff y)\diff u\right|\leq C_0\delta \|f\|_{\infty},\]
\[\left|\int_s^t \Lambda^N(u)(\N^*{\times}\N) \int_{\N^2} \Delta_2^+(f)(x,y) \Lambda^N(u)(\diff x,\diff y)\diff u\right|\leq C_0\delta \|f\|_{\infty},\]
and
\[\left|\int_s^t \int_{\N^2} (f(0,0)-f(x,y))\Lambda^N(u)(\diff x,\diff y)\diff u\right|\leq C_0\delta \|f\|_{\infty}.\]
Also, by Relation~\eqref{ll1} of Lemma~\ref{Tech_set} shows that there exists $C_1{<}\infty$ independent of $N$ such that 
\begin{multline*}
  \Ept{\left|\int_s^t \int_{\N^2} y \Delta^{\pm}(f)(x,y)\,\Lambda^N(u)(\diff x,\diff y)\diff u\right|}\\ \leq 2\|f\|_{\infty}\delta\frac{1}{N}\sum_{i=1}^N \Ept{\sup_{0\leq u\leq T} R_{i,2}^N(u)}\leq C_1\delta\|f\|_\infty.
\end{multline*}
If follows from the Chebishev's inequality that the sequence $(\Lambda^N(\cdot)(f))$ satisfies Relation~\eqref{tight_cond}, and hence it is tight.  
 
Moreover, if $\Lambda$ is a limiting point, from Relation~\eqref{Emp1}  and the fact that  the processes $H_1^N$, $H_2^N$, and ${\cal M}_f^N$ vanish as $N$ gets large, one obtains that Relation~\eqref{WeakLimit} holds,
Finally, it is straightforward to show that all the terms on the right-hand side of \eqref{WeakLimit} are continuous in $t$.
\end{proof}
We now show that  Equation~\eqref{WeakLimit} that characterized the  limits of $(\Lambda^N(t))$ has a unique solution. 
\begin{lemma}\label{lem_bound}
Let $(\Lambda(t))$ be a solution to equation \eqref{WeakLimit} with an initial condition $\Lambda(0)$, a probability on $\N^2$ with bounded support.  Then, for any $T{>}0$, there exists a constant $C_T$ such that for all $K{\geq}2\log(2)$,
\begin{equation}\label{claim}
    \sup_{0\leq t\leq T}\int_0^t\int_{\N^2}y\ind{y\ge K}\Lambda(s)(\diff x,\diff y)\diff s \le C_Te^{-K/2}.
\end{equation}  
\end{lemma}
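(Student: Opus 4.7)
The plan is to establish a uniform-in-time bound on the exponential moment
\[
m(t)\steq{def}\int_{\N^2}e^y\,\Lambda(t)(\diff x,\diff y),
\]
from which~\eqref{claim} will follow via the pointwise comparison of $y\ind{y\geq K}$ with $e^y$. The input will be~\eqref{WeakLimit} applied to the function $f(x,y){=}e^y{-}1$. Since this $f$ is not of finite support, I will first approximate it from below by the compactly supported functions $f_N(x,y){=}(e^y{-}1)\ind{x{+}y\leq N}$, apply~\eqref{WeakLimit} to each $f_N$, and pass to the limit by monotone convergence once a uniform-in-$N$ moment bound has been established (which is what the Gronwall argument below produces). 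A direct computation yields $\Delta^{\mp}(e^y){=}\Delta_2^{+}(e^y){=}(e{-}1)e^y$, $\Delta^{\pm}(e^y){=}-(1{-}e^{-1})e^y$, and the reset contribution $f(0,0){-}f(x,y){=}1{-}e^y$. Using $\ind{x>0}{\leq}1$ and $\Lambda(s)(\N^*{\times}\N){\leq}1$, the resulting integral inequality reads
\[
m(t)\leq m(0)+2\lambda(e{-}1)\int_0^t m(s)\,\diff s+\mu\int_0^t(1{-}m(s))\,\diff s-\mu(1{-}e^{-1})\int_0^t p(s)\,\diff s,
\]
where $p(s)\steq{def}\int y e^y\,\Lambda(s)(\diff x,\diff y)$.

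The key observation is that the negative term involving $p(s)$ absorbs the growth produced by the duplications. For any threshold $K_0\geq 0$, splitting the integral at $y{=}K_0$ gives
\[
p(s)\geq K_0\int e^y\ind{y\geq K_0}\,\Lambda(s)(\diff x,\diff y)\geq K_0\bigl(m(s){-}e^{K_0}\bigr),
\]
since $\Lambda(s)$ is a probability measure and $e^y{\leq}e^{K_0}$ on $\{y{<}K_0\}$. Substituting into the integral inequality and choosing $K_0$ large enough that $\mu(1{-}e^{-1})K_0\geq 2\lambda(e{-}1){+}\mu{+}1$ produces a differential inequality of the form $m'(t)\leq C_1{-}(\mu{+}1)m(t)$ with $C_1$ depending only on $\lambda$ and $\mu$. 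Gronwall's lemma then yields $\sup_{t\geq 0}m(t)\leq m(0){+}C_1/(\mu{+}1)\steq{def}M_\infty$.

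For the final step, the function $y\mapsto y e^{K/2-y}$ has derivative $(1{-}y)e^{K/2-y}$ and is therefore decreasing on $[1,\infty)$; hence for $K\geq 2\log 2$ its supremum on $[K,\infty)$ is attained at $y{=}K$ with value $Ke^{-K/2}\leq 1$, which yields $y\ind{y\geq K}\leq e^{-K/2}e^y$ for all $(x,y)\in\N^2$. Integrating against $\Lambda(s)(\diff x,\diff y)$ and then over $s\in[0,t]$ with $t\leq T$ delivers~\eqref{claim} with $C_T{=}TM_\infty$. The main obstacle is the rigorous justification of the first step, namely the extension of~\eqref{WeakLimit} from compactly supported test functions to the unbounded $e^y{-}1$: the compensator term $\int y\Delta^{\pm}(f)\,\Lambda$ is a priori delicate for such $f$, but this is precisely the term that powers the Gronwall closure, so the uniform-in-$N$ bound obtained by running the argument on each $f_N$ is exactly what is needed to validate the monotone-convergence passage.
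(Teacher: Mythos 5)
Your argument is correct, but it closes the exponential-moment bound by a genuinely different mechanism than the paper's proof. Both proofs reduce \eqref{claim} to the pointwise inequality $y\ind{y\ge K}\le e^{-K/2}e^{y}$ plus a bound on an exponential moment of $\Lambda(t)$, and both deal with the unbounded test function by truncating to finite support and passing to the limit. The difference is the choice of test function. The paper takes $\tilde f(x,y)=e^{x+y}\ind{x+y\le K_1}$, which depends only on $x+y$; since the transitions $\pm(e_1{-}e_2)$ preserve $x+y$, both $\Delta^{\mp}(\tilde f)$ and $\Delta^{\pm}(\tilde f)$ vanish identically, so the delicate term $\mu\int y\,\Delta^{\pm}(f)\,\diff\Lambda$ disappears outright and a one-line Gr\"onwall argument yields $\sup_{t\le T}\int e^{x+y}\diff\Lambda(t)\le c_T$ with $c_T$ growing exponentially in $T$. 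You instead take $f=e^y{-}1$, for which that same term equals $-\mu(1{-}e^{-1})\int y e^{y}\diff\Lambda$: it is negative and superlinear in the moment being controlled, and your splitting at $y=K_0$ converts it into an absorbing term dominating the $O(m(s))$ growth coming from the duplication terms. This buys a time-uniform bound $\sup_{t\ge 0}\int e^{y}\diff\Lambda(t)\le M_\infty$, hence a $C_T$ linear in $T$ rather than exponential, at the cost of the extra absorption step and of checking that the truncation preserves the structure --- which it does: $\Delta^{\pm}(f_N)$ keeps its sign and $\Delta_2^{+}(f_N)$ only decreases at the boundary $x+y=N$. One point to make explicit in a final write-up: passing from the integral identity to the differential inequality $m'(t)\le C_1-(\mu{+}1)m(t)$ and the ensuing comparison requires $t\mapsto\Lambda(t)(f_N)$ to be absolutely continuous, which is immediate from \eqref{WeakLimit} for finitely supported $f_N$ but is precisely why the Gr\"onwall step must be carried out at the level of $f_N$ before letting $N\to\infty$, as you indicate.
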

\begin{proof}
For all $t\leq T$, since $y\leq \exp(y/2)$ if $y\geq 2\log(2)$, then for $K\geq 2\log(2)$,
\begin{equation}\label{claim2}
    \int_{\N^2}y\ind{y\ge K}\Lambda(t)(\diff x,\diff y)\leq e^{-K/2}\int_{\N^2}e^{y}\Lambda(t)(\diff x,\diff y).
\end{equation}
For every $K_1\geq0,$ using equation \eqref{WeakLimit} for $\Lambda$ with $f$ replaced by
\[
\tilde f(x,y)=e^{x+y}\ind{x+y\leq K_1},
\]
and since 
$\Delta^{\mp}(\tilde f)=\Delta^{\pm}(\tilde f)=0,$
we have
\begin{multline*}
    \int_{\N^2}e^{x+y}\ind{x+y\le K_1}\Lambda(t)(\diff x,\diff y){\le}
    \int_{\N^2}e^{x+y}\Lambda(0)(\diff x,\diff y)\\
    {+}\lambda (e-1)\int_0^t\int_{\N^2}e^{x+y}\ind{x+y\le K_1}\Lambda(s)(\diff x,\diff y)\diff s\\
    {+}\mu\int_0^t\left(1-\int_{\N^2}e^{x+y}\ind{x+y\le K_1}\Lambda(s)(\diff x,\diff y)\right)\diff s.
\end{multline*}
By an application of Gr\"onwall's inequality, there exists a constant $c_T$ independent of $K_1$ such that
\[
    \sup_{0\leq t\le T}\int_{\N^2}e^{y}\ind{x+y\le K_1}\Lambda(t)(\diff x,\diff y) \le c_T.
\]
The bound ~\eqref{claim} can be obtained by letting $K_1$ go to infinity in the above inequality, and substituting it in Relation~\eqref{claim2}.
\end{proof}

\begin{prop}[Uniqueness]\label{UniqProp}
For every $\Lambda_0$ a probability on $\N^2$ with finite support, Equation~\eqref{WeakLimit} has at most one solution $(\Lambda(t))$ in ${\cal D}(\R^+,{\cal P}(\N^2))$, with  initial condition $\Lambda_0$.
\end{prop}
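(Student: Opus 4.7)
The strategy is to reduce the uniqueness question to the pathwise Gronwall-coupling argument already used in the proof of Theorem~\ref{McKeanTheo}. Let $(\Lambda^a(t))$ and $(\Lambda^b(t))$ be two solutions of~\eqref{WeakLimit} sharing the initial condition $\Lambda_0$ with finite support, and set $c^\alpha(s)\steq{def}\Lambda^\alpha(s)(\N^*{\times}\N)$ for $\alpha{\in}\{a,b\}$. For each $\alpha$, consider the time-inhomogeneous linear SDE obtained from Equations~\eqref{McKean} by replacing the nonlinear quantity $\P(\overline R_1(s){>}0)$ with the prescribed Borel function $c^\alpha(s)$. Since $c^\alpha$ is $[0,1]$-valued, this linear SDE admits a unique strong solution $\overline R^{(\alpha)}(t){=}(\overline R_1^{(\alpha)}(t),\overline R_2^{(\alpha)}(t))$ with initial distribution $\Lambda_0$, built piecewise between successive jump times.

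The first step consists in identifying the time-$t$ marginal of $\overline R^{(\alpha)}$ with $\Lambda^\alpha(t)$. Dynkin's formula applied to $\overline R^{(\alpha)}$ shows that its marginal law $\nu^\alpha_t$ satisfies the same linear Fokker-Planck equation as $\Lambda^\alpha$ does once $c^\alpha(s)$ has been frozen. Uniqueness for this linear equation, together with the exponential moment bound
\[
\sup_{0\leq t\leq T} \int_{\N^2} e^y \Lambda^\alpha(t)(\diff x,\diff y) \leq c_T,
\]
extracted from the proof of Lemma~\ref{lem_bound}, then yields $\nu^\alpha_t=\Lambda^\alpha(t)$ on $[0,T]$. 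The main obstacle of the proof lies in this step: the generator contains the unbounded rate $\mu y$ in the $\Delta^{\pm}$ term, which prevents a naive uniqueness argument for the linear evolution equation and makes the exponential moment control indispensable, e.g. to justify passing to the limit in a truncation of the state space at level $K{\to}\infty$.

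In the second step, I couple $\overline R^{(a)}$ and $\overline R^{(b)}$ by driving them with identical Poisson processes and the same $\Lambda_0$-distributed initial condition. Repeating verbatim the bounds~(a)--(d) in the proof of Theorem~\ref{McKeanTheo}, one obtains
\[
\Ept{\|\overline R^{(a)}{-}\overline R^{(b)}\|_{\infty,t}}\leq (2\lambda{+}3\mu)\int_0^t \Ept{\|\overline R^{(a)}{-}\overline R^{(b)}\|_{\infty,s}}\,\diff s+\lambda\int_0^t |c^a(s){-}c^b(s)|\,\diff s.
\]
Since $\overline R_1^{(\alpha)}$ is integer-valued, $|c^a(s){-}c^b(s)|\leq \Ept{|\overline R_1^{(a)}(s){-}\overline R_1^{(b)}(s)|}\leq \Ept{\|\overline R^{(a)}{-}\overline R^{(b)}\|_{\infty,s}}$, and Gronwall's inequality then forces $\overline R^{(a)}=\overline R^{(b)}$ almost surely. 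The identification of the first step yields $\Lambda^a(t)=\Lambda^b(t)$ for every $t{\in}[0,T]$, and since $T$ is arbitrary, uniqueness holds on $\R_+$.
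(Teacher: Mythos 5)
Your architecture is genuinely different from the paper's: instead of estimating $\|\Lambda^1(t)-\Lambda^2(t)\|_{TV}$ directly, you linearize by freezing $c^\alpha(s)=\Lambda^\alpha(s)(\N^*{\times}\N)$, represent each frozen equation by an SDE, and transfer uniqueness from the path level back to the measures. The coupling/Gr\"onwall step (your second step) is fine and is indeed a verbatim reuse of the estimates (a)--(d) from Theorem~\ref{McKeanTheo}; note in passing that it is even dispensable, since once each $\overline R^{(\alpha)}$ is known to have marginals $\Lambda^\alpha(t)$ it is a solution of the McKean--Vlasov equation~\eqref{McKean} and Theorem~\ref{McKeanTheo} already identifies its law.

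The genuine gap is your first step, the identification $\nu^\alpha_t=\Lambda^\alpha(t)$. This requires uniqueness, among measure flows in ${\cal D}(\R_+,{\cal P}(\N^2))$, for the \emph{linear} Fokker--Planck equation with frozen coefficient $c^\alpha$ --- and that linear equation still carries the unbounded rate $\mu y$ in the $\Delta^{\pm}$ term, so it is not any easier than the statement of Proposition~\ref{UniqProp} itself. You correctly name the obstacle and point to the exponential moment bound, but you do not carry out the argument, and all of the content of the proposition is concentrated exactly there. Concretely, one must: (i) extend Lemma~\ref{lem_bound} to solutions of the frozen equation (the same proof works since $c^\alpha\le 1$ and $\Delta^{\mp}$, $\Delta^{\pm}$ annihilate $e^{x+y}$, but this has to be said); (ii) test the difference of two solutions against functions with $\|f\|_\infty\le 1$, split the $y\Delta^{\pm}(f)$ term at a truncation level $K$, and obtain a bound of the form $\|\nu^\alpha_t-\Lambda^\alpha(t)\|_{TV}\le 4\mu C_Te^{-K/2}+(C+2\mu K)\int_0^t\|\nu^\alpha_s-\Lambda^\alpha(s)\|_{TV}\,\diff s$; and (iii) observe that after Gr\"onwall the right-hand side is $O\bigl(e^{-K/2}e^{2\mu Kt}\bigr)$, which vanishes as $K\to\infty$ only for $t<1/(4\mu)$, so that one must iterate over successive intervals of that length. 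This is precisely the paper's proof of Proposition~\ref{UniqProp}, applied to the linear rather than the nonlinear equation. So your route is logically sound but does not avoid the hard step; as written, the proof asserts it rather than proving it, and without it the coupling argument establishes nothing about $\Lambda^a$ and $\Lambda^b$.
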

\begin{proof}
Let $(\Lambda^{1}(t))$ and $(\Lambda^{2}(t)){\in} {\cal D}(\R_+,{\cal P}(\N^2))$  be solutions of ~\eqref{WeakLimit} with initial condition $\Lambda_0$. Let $f$ be a  bounded function on $\N^2$ and $t{\geq} 0$, we have
\begin{align*}
    \Lambda^1(t)(f)&-\Lambda^2(t)(f){=}\lambda \int_0^t \int_{\N^2} \Delta^{\mp}(f)(x,y)\ind{x>0}\left(\Lambda^1(s)-\Lambda^2(s)\right)(\diff x,\diff y)\diff s \\
    &{+}\lambda\int_0^t \Lambda^1(s)(\N^*{\times}\N) \int_{\N^2} \Delta_2^+(f)(x,y) \left(\Lambda^1(s)-\Lambda^2(s)\right)(\diff x,\diff y)\diff s\\
    &{+}\lambda\int_0^t \left(\Lambda^1(s)-\Lambda^2(s)\right)(\N^*{\times}\N) \int_{\N^2} \Delta_2^+(f)(x,y) \Lambda^2(s)(\diff x,\diff y)\diff s\\
    &{+} \mu \int_0^t \int_{\N^2} (f(0,0)-f(x,y))\left(\Lambda^1(s)-\Lambda^2(s)\right)(\diff x,\diff y)\diff s\\
    &{+}\mu\int_0^t \int_{\N^2} y\Delta^{\pm}(f)(x,y)\,\left(\Lambda^1(s)-\Lambda^2(s)\right)(\diff x,\diff y)\diff s.
\end{align*}
For a signed measure $m$ on $\N^2$, denote  
\[
    \|m\|_{TV}=\sup\left\{\int_{\N^2}f(x,y)m(\diff x,\diff y), f:\N^2\to \R \textrm{ with }\|f\|_\infty\le 1\right\}.
\]
Therefore, for every $f$ on $\N^2$ with $\|f\|_\infty{\leq}1$ and every $K{>}0$, we have
\begin{multline*}
    \left|\Lambda^1(t)(f)-\Lambda^2(t)(f)\right|{\le}(6\lambda{+}2\mu{+}2\mu K)\int_0^t\|\Lambda^1(s)-\Lambda^2(s)\|_{TV}\diff s\\
    {+}2\mu\int_0^t\int_{\N^2}y\ind{y\ge K}\Lambda^1(s)(\diff x,\diff y)\diff s{+}2\int_0^t\int_{\N^2}y\ind{y\ge K}\Lambda^2(s)(\diff x,\diff y)\diff s.
\end{multline*}
Now using \eqref{claim} of Lemma \ref{lem_bound}, and taking the supremum over all functions $f$ on $\N^2$ with $\|f\|_\infty\leq1$, we have
\[
\|\Lambda^1(t)-\Lambda^2(t)\|_{TV} \leq 4\mu C_te^{-K/2} +(6\lambda+2\mu+2\mu K)\int_0^t\|\Lambda^1(s)-\Lambda^2(s)\|_{TV}\diff s.
\]
Therefore, by another application of  Gr\"onwall's inequality,
\[
    \|\Lambda^1(t)-\Lambda^2(t)\|_{TV} \le 4\mu C_T e^{-K/2}e^{(6\lambda+2\mu+2\mu K)t}.
\]
For $t{<}1/(4\mu)$, by letting $K$ go to infinity in the above relation, one gets that $\Lambda^1(t){=}\Lambda^2(t)$. By repeating the same argument on successive time intervals  of width less than $1/(4\mu)$, one obtains the uniqueness result. 
\end{proof}
Now we can conclude the proof of Theorem \ref{MF-Theo}.
\begin{proof}[Proof of Theorem \ref{MF-Theo}]
Let $(x,y){\in} N^2$ and $\Lambda_0{=}\delta_{(x,y)}$, then  if  $(\overline{R}_1(t),\overline{R}_2(t))$ is the unique solution of Equation~\eqref{McKean} and  the measure valued process $(\Lambda^1(t))$ is defined  by,  if $f$ is a function with finite support on $\N^2$,
\[
    \Lambda^1(t)(f)\stackrel{\text{def.}}{=} \Ept{f(\overline{R}_1(t),\overline{R}_2(t))},
\]
it is straightforward to check that this is a solution of Equation~\eqref{WeakLimit}.  The convergence of $(\Lambda^N(t))$ follows from Propositions~\ref{tightprop} and~\ref{UniqProp}. The last assertion is a simple consequence of Proposition~2.2 in Sznitman~\cite{Sznitman}.
\end{proof}

\section{An Asymptotic Bound on the Decay of the Network}\label{Decay-Sec}
The asymptotic  process $(\overline{R}(t)){=}(\overline{R}_1(t),\overline{R}_2(t))$   of Theorem~\ref{McKeanTheo}  is an inhomogeneous Markov process with the following transitions: if $(\overline R(t))$ is in state $r{=}(r_1,r_2)$ at time $t$, the next possible state and the corresponding rates are given by
\begin{equation}\label{eqdd}
    r\mapsto \begin{cases} (0,0) &\text{with rate } \mu\\ r{+}e_2&\text{with rate } \lambda p(t) \end{cases}
    \text{ and} \quad
    r\mapsto \begin{cases}   r{-}e_1{+}e_2 &\text{with rate }\lambda\ind{r_1>0}\\   r{+}e_1{-}e_2 &\text{with rate } \mu r_2, \end{cases}
\end{equation}
where $p(t){=}\P\left(\overline{R}_1(t){>}0\right)$ is the non-linear part of the dynamic. A simple feature of this process is that it resets to the state $(0,0)$ at the epoch times of a Poisson process with rate $\mu$, and between two consecutive epoch times, the sum of its coordinates grows according to an inhomogeneous Poisson process with rate $p(\cdot)$. With this observation, the following proposition gives a representation of the distribution of the total number of copies with the function $(p(t))$. 
\begin{prop}\label{gen-prop}
If the initial state of $(\overline{R}_1(t),\overline{R}_2(t))$ is $(0,r_2)$ with $r_2{\in}\N$ then, for $u{\in}[0,1]$ and $t{\geq} 0$,
\begin{multline*}
    \Ept{u^{\overline{R}_1(t)+\overline{R}_2(t)}}=e^{-\mu t} u^{r_2} \exp\left(-\lambda(1{-}u) \int_0^tp(z)\,\diff z\right)\\+\int_0^{t} \exp\left(-\lambda(1{-}u) \int_{0}^sp(t{-}z)\,\diff z\right) \mu e^{-\mu s}\,\diff s.
\end{multline*}
\end{prop}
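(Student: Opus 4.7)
The plan is to exploit the observation---already noted just before the statement---that the scalar total $S(t) \steq{def} \overline{R}_1(t) + \overline{R}_2(t)$ is an autonomous inhomogeneous Markov chain on $\N$. Indeed, in the list of transitions~\eqref{eqdd}, the two ``internal'' moves $r \mapsto r - e_1 + e_2$ and $r \mapsto r + e_1 - e_2$ preserve the sum $r_1 + r_2$. Reading off the SDE~\eqref{McKean}, $S$ is therefore changed only (i)~by $+1$ jumps driven by $\overline{\cal N}_\lambda$ at the deterministic time-varying rate $\lambda p(t)$, and (ii)~by resets to $0$ driven by $\cal N_\mu$ at rate $\mu$. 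Since $\overline{\cal N}_\lambda$ and $\cal N_\mu$ are independent and $p(\cdot)$ is a deterministic function of $t$ (a moment of the solution, unique by Theorem~\ref{McKeanTheo}), the two mechanisms driving $(S(t))$ are independent Poisson sources.

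Given this decoupling, the main step is a regenerative decomposition on the last reset epoch. Let $\tau^*(t)$ denote the largest point of $\cal N_\mu$ in $[0,t]$, with the convention $\tau^*(t) = 0$ if there is no such point. The event $\{\tau^*(t) = 0\}$ has probability $e^{-\mu t}$, and on it $S(t) = r_2 + M_0$ with $M_0$ Poisson of parameter $\lambda \int_0^t p(z)\,\diff z$. On the complement, standard properties of the Poisson process give that $\tau^*(t)$ has density $\mu e^{-\mu(t-s)}\,\diff s$ on $(0,t]$, and conditional on $\tau^*(t) = s$ one has $S(t) = M_s$ with $M_s$ Poisson of parameter $\lambda \int_s^t p(z)\,\diff z$, because right after the reset at time $s$ the value is $0$ and no further reset falls in $(s,t]$.

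Applying the Poisson generating function identity $\Ept{u^{\mathrm{Poisson}(\alpha)}} = e^{-\alpha(1-u)}$ to each piece and summing yields
\[
\Ept{u^{S(t)}} = e^{-\mu t} u^{r_2} \exp\!\Bigl(-\lambda(1-u)\!\int_0^t p(z)\,\diff z\Bigr) + \int_0^t \mu e^{-\mu(t-s)} \exp\!\Bigl(-\lambda(1-u)\!\int_s^t p(z)\,\diff z\Bigr)\diff s,
\]
after which the change of variables $s \mapsto t - s$ in the outer integral combined with $z \mapsto t - z$ in the inner one rewrites the integrand in exactly the form stated in the proposition. The only point I would write out with care is the decoupling used in the first paragraph: the fact that the nonlinear rate $\lambda p(t)$ is a deterministic function of $t$ is what allows the $+1$ increments of $S$ to be treated as a genuine inhomogeneous Poisson process, independent of the resets driven by $\cal N_\mu$. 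Once this is secured the rest is a routine regenerative computation and no further obstacle arises.
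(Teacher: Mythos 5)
Your argument is correct, and it reaches the formula by a genuinely different route from the paper. The paper's proof also starts from the observation that the sum $\overline{R}_1(t)+\overline{R}_2(t)$ is an autonomous inhomogeneous Markov chain with $+1$ jumps at the deterministic rate $\lambda p(t)$ and resets to $0$ at rate $\mu$, but it then writes the Fokker--Planck equation for the generating function, namely $\frac{\diff}{\diff t}\Ept{u^{\overline{R}_1(t)+\overline{R}_2(t)}}=\mu+\left(\lambda p(t)(u{-}1){-}\mu\right)\Ept{u^{\overline{R}_1(t)+\overline{R}_2(t)}}$, and solves this first-order linear ODE by variation of constants; the two terms of the stated formula are exactly the homogeneous solution and the particular integral. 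Your regenerative decomposition on the last epoch of ${\cal N}_\mu$ before $t$ produces the identical intermediate expression $e^{-\mu t}u^{r_2}e^{-\lambda(1-u)\int_0^t p}+\int_0^t\mu e^{-\mu(t-s)}e^{-\lambda(1-u)\int_s^t p}\,\diff s$ before the change of variables, so the two proofs are computationally equivalent. What your version buys is a transparent probabilistic reading of the two terms (no failure on $[0,t]$ versus last failure at time $s$), at the cost of having to justify the decoupling explicitly; you correctly identify the one point that needs care, namely that the nonlinearity enters only through the deterministic function $p(\cdot)$, so the $+1$ increments driven by $\overline{{\cal N}}_\lambda$ form a genuine inhomogeneous Poisson process independent of the reset clock ${\cal N}_\mu$. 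The paper's route is shorter but hides the structure inside the ODE solution formula. Either write-up would be acceptable.
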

\begin{proof}
From Relation~\eqref{eqdd},   one obtains that the transition rates of the process $(\overline{R}_1(t){+}\overline{R}_2(t))$ are given by
  \[
r\mapsto   r{+}1, \text{ at rate }\lambda p(t) \text{ and }     r\mapsto     0,  \text{ at  rate }  \mu.
\]
The Fokker-Planck equation  associated to this process yields the relation
\[
\frac{\diff}{\diff t} \Ept{u^{\overline{R}_1(t)+\overline{R}_2(t)}}=\mu+\left( \lambda p(t) (u{-}1){-}\mu\right)\Ept{u^{\overline{R}_1(t)+\overline{R}_2(t)}}.
\]
It is then easy to conclude. 
\end{proof}
The problem with the above formula is that the function $t\mapsto p(t)$ is unknown. In the following, we obtain a lower bound on the asymptotic rate of decay of the network, i.e.\ the exponential rate of convergence of the process $(\overline{R}_1(t),\overline{R}_2(t))$ to $(0,0)$.

Recall that $R_{i,1}^N(t)$ and $R_{i,2}^N(t)$ are the number of files on server $i$ with one and two copies, respectively. Therefore, the quantity
\[
    L^N(t)=\sum_{i=1}^N R_{i,1}^N(t)+\frac{1}{2}\sum_{i=1}^N R_{i,2}^N(t)
\]
is the total number of distinct files in the system at time $t$. By Theorem~\ref{MF-Theo}, Equation~\eqref{ll1} of Lemma~\ref{Tech_set}, and an application of the dominated convergence theorem, we have
\[
   (L(t))\steq{def}\lim_{N\to\infty}\left(\frac{L^N(t)}{N}\right)=\left(\Ept{\overline{R}_1(t)}+ \frac{1}{2}\Ept{\overline{R}_2(t)}\right).
\]
The following proposition gives therefore a lower bound on the exponential rate of decay $(L(t)).$
\begin{prop}\label{up-prop}
If $(\overline{R}(t)){=}(\overline{R}_1(t),\overline{R}_2(t))$ is the solution of Equation~\eqref{McKean}, then
\begin{equation}\label{Up}
\Ept{\overline{R}_1(t) +\frac{1}{2} \overline{R}_2(t)}\le \Ept{\overline{R}_1(0) +\frac{1}{2} \overline{R}_2(0)} e^{-\kappa_2^+(\rho) \mu t},
\end{equation}
where  
\[
\kappa_2^+(x)=\frac{(3{+}x)-\sqrt{(3{+}x)^2{-}8}}{2}, \qquad x\in\R,
\]
and $\rho{=}\lambda/\mu$. 
\end{prop}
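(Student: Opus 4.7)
The plan is to reduce to ordinary differential equations for the first moments and then identify the optimal linear Lyapunov functional.

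First I would apply the Fokker--Planck equation \eqref{FK} to the test functions $f(r_1,r_2){=}r_1$ and $f(r_1,r_2){=}r_2$. These are unbounded, but their use can be justified by truncating to $f\wedge K$ and passing to the limit $K\to\infty$, legitimized by the exponential moment estimate of Lemma~\ref{lem_bound}. Writing $m_i(t){=}\Ept{\overline{R}_i(t)}$ and $p(t){=}\P(\overline{R}_1(t){>}0)$, this yields
\[
m_1'(t)=\mu m_2(t)-\mu m_1(t)-\lambda p(t),\qquad m_2'(t)=-2\mu m_2(t)+2\lambda p(t).
\]
The single structural input that linearizes the nonlinearity is Markov's inequality: since $\overline{R}_1$ is $\N$-valued, $p(t)\le m_1(t)$.

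For a free parameter $c\ge 1/2$ I would then consider the Lyapunov functional $V_c(t){=}m_1(t){+}c\,m_2(t)$. Substituting $p\le m_1$ into the non-negative $(2c{-}1)\lambda p$ term in $V_c'$ gives $V_c'(t)\le \bigl((2c-1)\rho-1\bigr)\mu\,m_1(t)-(2c-1)\mu\,m_2(t)$; requiring this to dominate $-\kappa\mu V_c(t)$ produces the two scalar constraints $\kappa\le 1-(2c-1)\rho$ and $\kappa\le 2-1/c$. The first is decreasing in $c$ and the second increasing, so the maximum decay rate is attained where they coincide. Setting them equal and reparametrizing by $c{=}1/(2{-}\kappa)$ reduces the problem to the quadratic $\kappa^2-(3+\rho)\kappa+2=0$, whose smaller root is precisely $\kappa_2^+(\rho)$. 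For $c^*{=}1/(2{-}\kappa_2^+(\rho))$, Gronwall's lemma then yields $V_{c^*}(t)\le V_{c^*}(0)\,e^{-\kappa_2^+(\rho)\mu t}$, and the pointwise inequality $L(t)\le V_{c^*}(t)$, valid since $c^*\ge 1/2$, transfers the decay to $L(t){=}m_1(t){+}m_2(t)/2$.

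The main obstacle I anticipate is matching the prefactor in \eqref{Up}: Gronwall delivers $V_{c^*}(0)$ on the right-hand side, whereas the statement demands $L(0)$, and these coincide only in the degenerate case $c^*{=}1/2$. Closing that gap will likely require either refining the Lyapunov family with nonlinear corrections in $m_2$, or exploiting the monotonicity $L'{=}-\mu m_1{\le}0$ together with a finer comparison argument at the initial time.
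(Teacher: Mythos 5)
Your reduction to the moment ODEs, the use of Markov's inequality $p(t)\le m_1(t)$, and the identification of $\kappa_2^+(\rho)$ as the root of $\kappa^2-(3+\rho)\kappa+2=0$ reproduce exactly the paper's ingredients: the paper sets $g(t)=m_1(t)-p(t)\ge 0$, diagonalizes the drift matrix $A=\bigl(\begin{smallmatrix}-(\lambda+\mu)&\mu\\ 2\lambda&-2\mu\end{smallmatrix}\bigr)$, and solves by variation of constants, discarding the sign-definite forcing term. Your functional $V_{c^*}$ with $c^*=1/(2-\kappa_2^+(\rho))$ is precisely the left eigenvector of $A$ for the eigenvalue $-\mu\kappa_2^+(\rho)$, so the two arguments carry the same information; yours is the more economical packaging (a scalar differential inequality plus Gr\"onwall rather than the explicit fundamental solution), and it is correct as far as it goes.

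The prefactor obstacle you flag is genuine, and you should not expect to remove it. The paper's proof disposes of it in its final display by replacing $e^{-\mu\kappa_2^-(\rho)t}$ with $e^{-\mu\kappa_2^+(\rho)t}$ in the term $(1+y_2/2)h_2e^{-\mu\kappa_2^-(\rho)t}$; since $1+y_2/2=\kappa_2^+(\rho)/2>0$ and $\kappa_2^-(\rho)>\kappa_2^+(\rho)$, this substitution gives an upper bound only when $h_2=(y_1m_1(0)-m_2(0))/(y_1-y_2)\ge 0$, which fails for instance for the initial state $(0,r_2)$ with $r_2\ge 1$ --- the state used in Proposition~\ref{gen-prop} and produced by Assumption~\ref{asm_initial}. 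In that regime the inequality \eqref{Up} itself fails for small $t$: from the ODEs one gets $\frac{\diff}{\diff t}(m_1+\tfrac12 m_2)(t)=-\mu m_1(t)$, which vanishes at $t=0$ when $m_1(0)=0$, while the right-hand side of \eqref{Up} has strictly negative derivative at $0$ and the two sides agree there. What your argument does establish, and what is actually available, is $L(t)\le V_{c^*}(t)\le V_{c^*}(0)e^{-\mu\kappa_2^+(\rho)t}\le \frac{2}{2-\kappa_2^+(\rho)}\,L(0)\,e^{-\mu\kappa_2^+(\rho)t}$, i.e.\ a bound of the announced form $Ce^{-\mu\kappa t}$ with $C\le 2$; state it that way rather than chasing the prefactor $L(0)$, which cannot be attained for all initial conditions.
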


The quantity $\kappa_2^+(\rho) \mu$ is thus a lower bound for the exponential rate of decay. When  there is no duplication capacity, i.e.\  $\lambda{=}0$, $\kappa_2^+(\rho){=}1$ and the lower bound becomes $\mu$, the failure rate of servers, as expected. On the other hand, when the duplication capacity goes to infinity, the lower bound goes to $0$.
\begin{proof}[Proof of Proposition \ref{up-prop}]
Let $m_1(t){=}\Ept{\overline{R}_1(t)}$ and $m_2(t){=}\Ept{\overline{R}_2(t)}$. Taking expectation from both sides of Equations~\eqref{McKean}, we conclude that the pair $(m_1,m_2)$ satisfy the following set of Ordinary Differential Equations (ODEs):
\begin{equation}\label{ode}
\begin{cases}
\dot m_1(t)=-\lambda p(t)+\mu(m_2(t)-m_1(t)),\\
\dot m_2(t)=2\lambda p(t)-2\mu m_2(t).
\end{cases}
\end{equation}
Defining $g(t){=}m_1(t){-}p(t)$, then clearly $0{\le} g(t){\le} m_1(t)$. The  ODEs~\eqref{ode} can be rewritten as
\begin{align}\label{ODE_vector}
\frac{\diff}{\diff t} {m_1(t)\choose m_2(t)}=A{m_1(t)\choose m_2(t)}+\lambda{g(t)\choose -2 g(t)},
\end{align}
where $A$ is the matrix
\[
A=\left(\begin{matrix}
{-}(\lambda{+}\mu)&\mu\\
2\lambda&{-}2\mu
\end{matrix}
\right).
\]
It  has two negative eigenvalues, ${-}\mu\kappa_2^+(\rho)$ and ${-}\mu\kappa_2^-(\rho)$ with
\[
\kappa_2^+(\rho)=\frac{(3{+}\rho)-\sqrt{(3{+}\rho)^2-8}}{2},\quad \kappa_2^-(\rho)=\frac{(3{+}\rho)+\sqrt{(3{+}\rho)^2{-}8}}{2}.
\]
Defining the constants $y_1{=}({-}\mu\kappa_2^+(\rho){+}\lambda{+}\mu)/\mu$, $y_2{=}({-}\mu\kappa_2^-(\rho){+}\lambda{+}\mu)/\mu$,
\[
h_1=\frac{1}{y_1{-}y_2}({-}y_2 m_1(0){+}m_2(0)), \quad\text{and}\quad h_2=\frac{1}{y_1{-}y_2}(y_1 m_1(0){-}m_2(0)),
\]
the standard formula for explicit solution of the linear ODE \eqref{ODE_vector}, with $g$ regarded as an external force,  gives
\begin{align}
  m_1(t)&=h_1e^{-\mu\kappa_2^+(\rho)t}+h_2e^{-\mu\kappa_2^-(\rho)t}\label{m1}\\&-\frac{\lambda}{y_1{-}y_2}\int_0^tg(s)\left[(y_2{+}2)  e^{-\mu\kappa_2^+(\rho)(t{-}s)}{-}(y_1{+}2)e^{-\mu\kappa_2^-(\rho)(t-s)}\right]\,\diff s, \notag\\
m_2(t)&=y_1h_1e^{-\mu\kappa_2^+(\rho)t}+y_2h_2e^{-\mu\kappa_2^-(\rho)t}\label{m2} \\ & -\frac{\lambda}{y_1{-}y_2}\int_0^tg(s)\left[(y_2{+}2)y_1 e^{-\mu\kappa_2^+(\rho)(t-s)}{-}(y_1{+}2)y_2e^{-\mu\kappa_2^-(\rho)(t-s)}\right]\,\diff s.\notag
\end{align}
Therefore, using the fact $g(s){\geq}0$ in the first inequality, and the relations  $y_1{>}y_2{\geq}{-}2$ and $\kappa_2^+(\rho) {<}\kappa_2^-(\rho)$ in the second inequality below, we conclude
\begin{align*}
m_1(t)+\frac{1}{2}m_2(t)&=\left(1+\frac{y_1}{2}\right)h_1e^{-\mu\kappa_2^+ (\rho)t}+\left(1+\frac{y_2}{2}\right)h_2e^{-\mu\kappa_2^-(\rho)t}\notag \\
&-\frac{\lambda}{y_1{-}y_2}\frac{1}{2}(y_1{+}2)(y_2{+}2)\int_0^tg(s) \left(e^{-\mu\kappa_2^+(\rho)(t-s)}{-} e^{-\mu\kappa_2^-(\rho)(t-s)}\right)\diff s\notag\\
&\le\left(1+\frac{y_1}{2}\right)h_1e^{-\mu\kappa_2^+(\rho)t}+\left(1+\frac{y_2}{2}\right)h_2 e^{-\mu\kappa_2^-(\rho)t}\notag\\
&\leq \left(m_1(0)+\frac{1}{2}m_2(0)\right)e^{-\mu\kappa_2^+(\rho)t},\notag%\label{eq24}
\end{align*}
 This completes the proof.
\end{proof}
\section{The Case of Multiple Copies}\label{Ext-Sec}
In this section, we consider the general case where each file has a maximum number of $d$ copies in the system. We now describe the algorithm without too much formalism for sake of simplicity.    The duplication capacity of a given node is used for one of its copies corresponding to a file with the least number of copies in the network. Provided that this number is strictly less than $d$,  a new copy is done at rate $\lambda$ at  random   on a node of the network.  See Section~3 of Sun at al.~\cite{SSMRS} for a quick description of how this kind of mechanism can be implemented in practice. The node receives copies from other nodes  from this duplication mechanism. As before, at rate $\mu$ all copies of the node are removed.

As it will be seen, the model does not seem to be mathematically tractable as in the case $d{=}2$.  To understand the effect of the maximum number of copies $d$ on the performance of the file system, we study the  asymptotic behavior  of  a stochastic model which is dominating  (in some sense) our network.  We then study the decay rate of this new model.

The initial condition of our system  are given by the following assumption. 
\begin{assumption}\label{asm_initial2}
(Initial State)  There is a set ${\cal F}_N$ of $F_N$ initial files, and for $f{\in}{\cal F}_N$, a subset of $d$ nodes of $\{1,\ldots,N\}$ is taken at random and on each of them a copy of $f$ is done. 
\end{assumption}
It should be noted that, with the duplication mechanism described above, a copy can be made on a node which has already a copy of the same file. But, with  similar methods as the ones used  in the proof of Lemma~\ref{Tech_set}, it can be shown that on any finite time interval, with probability $1$, there is only a finite number of files which have at least two copies on a server.  In particular, this assumption has no influence on the asymptotic results obtained in this section since they are concerning asymptotic growth in $N$ of the number of files alive at time~$t$. 

With these assumptions, if $f$ is a file, $f{\in}{\cal F}_N$, one denotes by ${\cal A}_f^N(t){\subset}\{1,\ldots,N\}$  the subset of nodes which have a copy of $f$ at time $t$. The cardinality   of the set ${\cal A}_f^N(t)$  is denoted as $c^N_f(t)$, it is at most $d$. The process $$({\cal A}^N(t)){\steq{def}} ({\cal A}_f^N(t), f{\in}{\cal F}_N)$$ gives a (Markovian) representation of the time evolution of the state of the network. 
\subsection{The Additional Complexity of the Model}
A analogous  Markovian description as for the case $d{=}2$ can be done in the following way.  If ${\cal S}$ is the set of non-empty subsets  of $\{1,\ldots,N\}$ whose cardinality is less or equal to $d$ and, for $A\in {\cal S}$, if $X_A^N(t)$ is the number of files with a copy  only in the nodes whose index is in $A$,
\[
X_A^N(t)=\sum_{f{\in}{\cal F}_N} \ind{{\cal A}_f^N(t)=A}
\]
then it is not difficult to show that $(X^N(t)){=}(X_A^N(t),A\in{\cal S})$ is a Markov process, even if its transitions are not so easy to write formally. 
Following the analysis done for the case $d{=}2$, it is natural to introduce, for $1{\leq}i{\leq}N$, and $1{\leq}k{\leq}d$, the quantity
\[
R_{i,k}^N(t)=\sum_{\substack{A\in{\cal S},  i\in A\\ \card(A)=k}} X_A^N(t)=\sum_{f{\in}{\cal F}_N} \ind{i\in{\cal A}_f^N(t), c^N_f(t)=k}
\]
is the number of files having $k$ copies in the whole network, with a copy on server $i$. It is the equivalent of the variables $R_{i,1}^N(t)$ and $R_{i,2}^N(t)$ of the case $d{=}2$.

The vector $R_i^N(t){=}(R_{i,k}^N(t), 1{\leq} k{\leq} d)$ gives also a reduced representation of the state of the node $i$ at time $t$.  It turns out that the evolution equations of this model are much more involved. To observe why our method cannot be worked out for general $d{>}2$, let us try, as in Section~\ref{McV-sec}, to heuristically obtain the transition rates of a possible  asymptotic limit process for this model. 

Fix $1{\leq} k{<}d$ and $1 {\leq} i{\leq} N$, $e_k$ is the $k$th unit vector of $\N^d$, and $r{=}(r_j){=}(R_{i,j}^N(t-))$,  then  the process $(R_{i,j}^N(t)$ jumps   from $r$ to $r{-}e_k{+}e_{k+1}$ at time $t$ according with two types of events:
\renewcommand{\theenumi}{\alph{enumi}}
\begin{enumerate}
\item   due to the duplication capacity at node $i$, it occurs  at rate $\lambda$ under the condition  $r_1{=}r_2{=}\cdots{=}r_{k-1}{=}0$ and $r_k{>}0$. Recall that only files with the least number of copies are duplicated. 
  \item  If a copy of a file is present at $i$  with a total $k$ copies is duplicated on one of the other $k{-}1$ servers having a copy of this file,  conditionally  on the past before $t$, it occurs at rate 
 \[
    \lambda \sum_{j\not=i} \ind{R_{j,\ell}^N(t-){=}0, 1{\leq}\ell{<}k,R_{j,k}^N(t-)>0}\frac{1}{R_{j,k}^N(t-)}\sum_{\substack{A\in{\cal S}:  i,j\in A\\ \card(A)=k}}X_A^N(t-).
    \]
\end{enumerate}
The first event is similar as in  the case $d{=}2$, the jump rate can be expressed in terms of the vector $r$.  This is not the case for the second event. The last sum of the above expression does not seem to have an expression in terms of the components of the vector $r$. It requires a much more detailed description. The information provided by $r$ is not enough, even in the limit when $N$ goes to infinity as it is the case when $d{=}2$. Consequently,  it does not seem that one can derive autonomous equations describing the  asymptotic dynamics of $(R^N_i(t))$.

\subsection{Introduction of  a  Dominating Process}
We now consider the following related  Markov process. We first describe it, without too much formalism for sake of simplicity, in terms of a duplication system. Note that this is {\em not} an alternative algorithm but merely a way of having a mathematically tractable stochastic process that will give us a lower bound of the exponential  decay rate  of the initial system. For convenience,  we will use nevertheless the terminology of ``files'', ``copies'', ``duplication'' and ``servers'' to describe this new process.

The initial condition is also  given by Assumption~\ref{asm_initial2}. If $f$ is one of the files of the network, as long as the total number of copies of $f$ is strictly less that $d$, each of the nodes having one of these copies generates a new copy of $f$ at rate $\lambda$ at random on a node. The case of multiple copies on the same server is taken care of as for the process $({\cal A}^N(t))$.  The failures of a given node occur according to a Poisson process with rate $\mu$ and,  as for our algorithm,  all copies are lost.

The  system works the same as the original model, except that each file on a server $i$ with strictly less than $d$ copies in the network, can be duplicated at rate $\lambda$, regardless of any other copy on that server.
Consequently, if, for $1{\leq}k{\leq}d$, a node has $r_k$ files, each of them  with a total of $k$ copies,  at a given time, the ``duplication capacity'' of this node  is  given by 
\[
    \lambda \left( r_1{+}r_2{+}\cdots{+}r_{d-1}\right),
\]
instead of $\lambda$ in our algorithm. Remember nevertheless that such system  is not possible in practice, it is used only to estimate the performances of the algorithm introduced at the beginning of this section. 

For $f{\in}{\cal F}_N$, one denotes by ${\cal B}_f^N(t)$  the finite subset of $\{1,\ldots,N\}$ of nodes having a copy of $f$ at time $t$ and its  cardinality is denoted by   $d^N_f(t)$. We define
 \[
 ({\cal B}^N(t)){\steq{def}} ({\cal B}_f^N(t), f{\in}{\cal F}_N).
 \]
 Since the duplication mechanism associated to the process $({\cal B}^N_f(t))$ is more active than for our algorithm, intuitively the decay rate of our system should be faster that the decay rate of the process $({\cal B}^N_f(t))$. The following lemma will be used to establish rigorously this relation. 
\begin{lemma}
  There exists a coupling of the processes $({\cal A}^N(t))$ and  $({\cal B}^N(t))$  such that, almost surely,  for all $f{\in}{\cal F}_N$ and $t{\geq}0$, ${\cal A}^N_f(t){\subset} {\cal B}^N_f(t)$.
\end{lemma}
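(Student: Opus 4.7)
The plan is to realize both processes on the same probability space by a synchronous coupling of Poisson processes, and then to verify the inclusion by induction on the (almost surely locally finite) sequence of jump times on each compact interval $[0,T]$. The inclusion for all $t\geq 0$ then follows by letting $T\to\infty$.

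First, I would share the server-failure processes $({\cal N}_{\mu,i})_{1\leq i\leq N}$ between the two systems, so that each failure of server $i$ at time $t$ removes $i$ simultaneously from every $\mathcal{A}_f^N$ and $\mathcal{B}_f^N$. Since $C\setminus\{i\}\subset D\setminus\{i\}$ whenever $C\subset D$, the inclusion $\mathcal{A}_f^N(t-)\subset\mathcal{B}_f^N(t-)$ is automatically preserved at every failure time, and no work is needed for this case.

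Next, for the duplications I would use a family $({\cal N}^B_{\lambda,i,f})$ of independent rate-$\lambda$ marked Poisson processes indexed by pairs $(i,f)$ with $1\leq i\leq N$ and $f\in\mathcal{F}_N$, the marks being uniform on $\{1,\ldots,N\}\setminus\{i\}$ and serving as the target server. These would drive the $\mathcal{B}$ system directly: at an arrival of ${\cal N}^B_{\lambda,i,f}$ at time $t$ with mark $j$, the $\mathcal{B}$ process adds $j$ to $\mathcal{B}_f^N$ provided $i\in\mathcal{B}_f^N(t-)$ and $|\mathcal{B}_f^N(t-)|<d$. The $\mathcal{A}$ duplications would then be built as a measurable thinning of these arrivals (using auxiliary i.i.d.\ uniform random variables) arranged so that (i) the total $\mathcal{A}$-duplication rate at each server equals $\lambda$, (ii) the file selected at server $i$ is always among those on $i$ with the least number of copies in $\mathcal{A}$, and (iii) every retained $\mathcal{A}$ event for a triple $(i,f,j)$ is triggered by the same arrival of ${\cal N}^B_{\lambda,i,f}$, so source, file, and target coincide in the two systems.

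The inductive step then runs as follows. Suppose $\mathcal{A}_f^N(t-)\subset\mathcal{B}_f^N(t-)$ for all $f$. Failures are handled by the remark above. A pure $\mathcal{B}$-duplication does not affect $\mathcal{A}_f^N$ and only enlarges $\mathcal{B}_f^N$, so inclusion is maintained. For a matched duplication of $(i,f,j)$, the inductive hypothesis gives $i\in\mathcal{A}_f^N(t-)\subset\mathcal{B}_f^N(t-)$, and provided $|\mathcal{B}_f^N(t-)|<d$ the $\mathcal{B}$ system also adds $j$, so the inclusion is preserved. The delicate case, and what I expect to be the main obstacle, is when an $\mathcal{A}$-duplication of file $f$ is triggered at an instant where $|\mathcal{B}_f^N(t-)|=d$ while $|\mathcal{A}_f^N(t-)|<d$: the $\mathcal{B}$ update is suppressed by the cap, whereas $\mathcal{A}$ must add a target $j$ that is a priori uniform over $\{1,\ldots,N\}\setminus\{i\}$ and could lie outside $\mathcal{B}_f^N(t-)$. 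To handle this case I would strengthen the induction with an auxiliary invariant describing how $\mathcal{B}_f^N\setminus\mathcal{A}_f^N$ is populated and, if necessary, re-couple the target marks in the boundary regime by a measurable exchange of labels that forces $\mathcal{A}$'s target to be drawn from $\mathcal{B}_f^N(t-)$ while preserving the marginal uniform law of the target in the $\mathcal{A}$ system. Once this refined invariant is shown to be stable at every transition, the set inclusion holds a.s.\ on $[0,T]$ for every $T$, and the lemma follows.
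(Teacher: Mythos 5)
Your proposal is essentially the paper's coupling: shared failure clocks for the two systems, duplication events matched so that source, file and target coincide, induction over the (a.s.\ locally finite) sequence of jump times, and a redirection of $\mathcal{A}$'s target in the saturated case. The one step you leave as a plan --- the case $\card(\mathcal{B}^N_f(t-))=d>\card(\mathcal{A}^N_f(t-))$ --- is closed in the paper without any strengthened invariant: the induction hypothesis $\mathcal{A}^N_f(t-)\subset\mathcal{B}^N_f(t-)$ together with the cardinality gap already forces $\mathcal{B}^N_f(t-)\setminus\mathcal{A}^N_f(t-)\neq\emptyset$, and the new $\mathcal{A}$-copy is simply placed on such a node, so your ``auxiliary invariant describing how $\mathcal{B}^N_f\setminus\mathcal{A}^N_f$ is populated'' can be replaced by the observation that its nonemptiness is automatic. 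The remaining point --- that this redirection does not alter the law of $(\mathcal{A}^N(t))$ --- is justified in the paper only by the remark that the targets of $\mathcal{B}$'s duplications are themselves chosen uniformly at random, i.e.\ exactly the exchangeability argument you allude to, so your sketch is at the same level of rigor as the paper's own proof.
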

\begin{proof}
  This is done by induction on the number of jumps of the process  $({\cal B}^N(t))$. By assumption one can take   ${\cal A}^N(0){=}{\cal B}^N(0)$. If the relation  ${\cal A}^N_f(t){\subset} {\cal B}^N_f(t)$, at the instant $t=\tau_n$ of the $n$th jump of  $({\cal A}^N(t))$ and $({\cal B}^N(t))$. We review the different scenarios for the next jump after time $\tau_n$, at time $\tau_{n+1}$,
  \begin{enumerate}
  \item if some node $i_0{\in}\{1,\ldots,N\}$ fails, then, for $f{\in}{\cal F}_N$,
    \[
            {\cal A}^N_f(\tau_{n+1}){=}{\cal A}^N_f(\tau_{n}){\setminus}\{i_0\}\text{ if }   i_0{\in}{\cal A}^N_f(\tau_{n}), \qquad
      {\cal A}^N_f(\tau_{n+1}){=}{\cal A}^N_f(\tau_{n})  \text{ otherwise, }
      \]
      and a similar relation holds for ${\cal B}^N_f(\tau_{n+1})$. The relation ${\cal A}^N_f(t){\subset} {\cal B}^N_f(t)$ still holds for $t{=}\tau_{n+1}$ since it is true at time $\tau_n$. 
    \item If a duplication occurs for the process $({\cal A}^N(t))$ at time $\tau_{n+1}$ at some node  $i_0{\in}\{1,\ldots,N\}$ and for file a $f{\in}{\cal F}_N$, In particular $i_0{\in}{\cal A}^N_f(\tau_n)$ and therefore, by induction hypothesis, $i_0{\in}{\cal B}^N_f(\tau_n)$, so that we can couple both the duplication process for both processes $({\cal A}^N(t))$ and  $({\cal B}^N(t))$ as follows
      \begin{itemize}
      \item If $\card({\cal B}^N_f(\tau_n)){<}d$. A copy is made on the same node for both processes $({\cal A}^N(t))$ and $({\cal B}^N(t))$.
      \item If $\card({\cal B}^N_f(\tau_n)){=}d$. There exists some node $i_0$ such that $i_0{\not\in}{\cal A}^N_f(\tau_n)$ and $i_0{\in}{\cal B}^N_f(\tau_n)$. We can then set
        ${\cal A}^N_f(\tau_{n+1})={\cal A}^N_f(\tau_n){\cup}\{i_0\}$, remember that for the process $({\cal B}^N(t))$ the servers where to make a copy are also chosen at random. 
      \end{itemize}
      In both cases the relation  ${\cal A}^N_f(\tau_{n+1}){\subset} {\cal B}^N_f(\tau_{n+1})$ will hold.
      \item If a duplication occurs for the process $({\cal B}^N(t))$ at time $\tau_{n+1}$ but not for  the process $({\cal A}^N(t))$ then,  clearly,  the desired relation will then also hold  at time $\tau_{n+1}$.
  \end{enumerate}
  The lemma is proved.
\end{proof}
 %%%%%%%%%%%%%%%%%%%%%%%%%%%%%%%%%%%%%%%%%%%%%%%%%%%%%%%%%%%%%%
For the dominating model, for $k{\in}\{1,\ldots,d\}$ and $1{\leq}i{\leq}N$, we will denote by $T_{i,k}^N(t)$  the number of files of type $k$ and  with one copy on node $i$ at time $t{\geq}0$, this is the analogue of the variable  $R_{i,k}^N(t)$ defined above, 
\[
T_{i,k}^N(t)=\sum_{f{\in}{\cal F}_N} \ind{i\in {\cal B}_f^N(t), d^N_f(t)=k}. 
\]
For a given node $i$, if $(T_{i,k}^N(t{-})){=}r{=}(r_k)$, provided that there are no multiple copies on the same server just before time $t$, the transition rates of this process at time $t$ are given by
\[
r\mapsto  \begin{cases}
  r{-}e_{k}{+}e_{k+1},  &\text{\rm at rate } \lambda k r_k,\qquad 1{\leq} k{<}d,\\
  r{+}e_{k-1}{-}e_k, &\phantom {\rm at rate  aa}  \mu (k{-}1)r_k,\qquad  1{<}k{\leq}d.
\end{cases}
\]
and
\[
r\mapsto \begin{cases}
  (0,0), & \text {\rm at rate  } \mu,\\
 r{+}e_k,&\displaystyle \phantom {\rm at rate  aa}  \lambda\frac{(k{-}1)}{N{-}k{+}1}\sum_{f{\in}{\cal F}_N} \ind{i\not\in {\cal B}_f^N(t), d^N_f(t-)=k-1}.
\end{cases}
\]
Note that the last sum
is the sum of the terms $T_{j,k-1}^N(t{-})$, $j{=}1$, \ldots, $N$ minus some term which is less that $(k{-}1)T_{i,k-1}^N(t{-})$. The term $T_{i,k-1}^N(t{-})$, with appropriate estimates as  in Section~\ref{MF-sec},  will vanish in the limit when divided by  $N{-}k{+}1$. Consequently, asymptotically,  the transitions of the vector $(T_{i,k}^N(t))$ can be expressed as a functional of its coordinates.  With the same methods as for the original model for $d{=}2$ in Section~\ref{MF-sec}, it is not difficult to show that an analogue of Theorem~\ref{MF-Theo} holds.
\begin{theorem} (Mean-Field Convergence Theorem)\label{MF-Theo2}
  Suppose the process $({\cal A}^N(t))$ is initialized according to Assumption \ref{asm_initial2}. The  process of the empirical distribution
  \[
(\Lambda^N(t)) =\left(\frac{1}{N}\sum_{i=1}^N \delta_{(T_{i,k}^N(t),1\leq k\leq d)}\right)
  \]
  converges in distribution to a process $(\Lambda(t)\in{\cal D}(\R_+,{\cal P}(\N^2))$ such that :  for every function $g$ with finite support on $\N^d$,
\[
    \Lambda(t)(g)\stackrel{\text{def.}}{=} \Ept{g\left(\overline{T}_k(t)\right)},
\]
where $(\overline{T}(t)){=}(\overline{T}_k(t))$ is  a nonlinear Markov process  with the following transition rates: if $(\overline{T}(t))$ is in state $r{=}(r_k)$ just before  time $t$, the next possible state and the corresponding rates are given by
\[
r\mapsto \begin{cases}
  (0,0), & \mu,\\
  r{+}e_k,& \lambda \E\left(\overline{T}_{k-1}(t)\right),\, 1{\leq} k{\leq} d,
\end{cases}
\text{ and } \begin{cases}
  r{-}e_{k}{+}e_{k+1}, &\lambda k r_k,\, 1{\leq}k<d,\\
  r{+}e_{k-1}{-}e_k, & \mu (k{-}1)r_k,\, 1{<} k{\leq} d.
\end{cases}
\]
\end{theorem}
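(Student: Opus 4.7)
The plan is to follow closely the strategy developed for the case $d{=}2$ in Sections~\ref{McV-sec}--\ref{MF-sec}, adapting each ingredient to the higher-dimensional state variable $(T_{i,k}^N(t),1{\leq}k{\leq}d)$. First, I would write the stochastic differential equations driving $(T_{i,k}^N(t))$ in terms of Poisson processes $\mathcal{N}_{\mu,i}$ for node failures and marked Poisson processes $\mathcal{N}^{U,N}_{\lambda,i,f}$ for duplications from node $i$ of a file $f$. The resulting SDEs are analogous to~\eqref{eqR1}--\eqref{eqR2} but contain three families of jump terms corresponding to: (i) own failure of node $i$ (reset to~$0$), (ii) failure of another node $j$ holding a copy of a file of type $k$ present at $i$ (moving the file from type $k$ to type $k{-}1$), and (iii) duplications, which either move a file of type $k{<}d$ at node $i$ to type $k{+}1$ or else, when the duplication originates at another node $j$ and targets $i$, add a file of type $k$ at $i$.

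Next I would establish the analogue of Lemma~\ref{Tech_set} in this setting: a uniform-in-$N$ second-moment bound for $\sup_{t\leq T}\sum_{k} T_{i,k}^N(t)$, and a bound of order $1/N$ for the probability of the event ${\cal E}_i^N(T)$ that some file has at least two copies on a single server during $[0,T]$, and for the probability that node $i$ shares more than one file with any other node. As noted in the paragraph following Assumption~\ref{asm_initial2}, both facts follow from the same combinatorial argument based on i.i.d.\ uniform placements and the at-most-$d$ copies constraint. These estimates allow one to replace, in the generator, the sum $\sum_{f}\ind{i\notin{\cal B}_f^N(t),\,d_f^N(t{-}){=}k{-}1}$ by $\sum_{j\neq i}T_{j,k-1}^N(t)$ up to an error of order $T_{i,k-1}^N(t)$, which becomes negligible after division by $N{-}k{+}1$.

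From here I would write the evolution equation for $\Lambda^N(t)(g){=}\frac{1}{N}\sum_i g(T_{i,\cdot}^N(t))$ for a function $g$ with finite support on $\N^d$, in complete analogy with~\eqref{Emp1}. It splits into a martingale part ${\cal M}_g^N$, an ``interaction'' part involving $\Lambda^N(s)(\{r:r_{k-1}{>}0\})$ (which replaces $p(t)$), local drift terms depending only on the state of node $i$, and remainder terms collecting the contribution of the events ${\cal E}_i^N(T)$. Using Doob's inequality for $({\cal M}_g^N(t))$ together with the second-moment bound above (exactly as in the calculation of $\langle{\cal M}^N_f\rangle$ below~\eqref{Emp1}), and Cauchy--Schwarz combined with $\P({\cal E}_i^N(T)){\leq}C(T)/N$ for the remainders, all these terms are shown to vanish in probability uniformly on compact sets. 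Tightness of $(\Lambda^N(t)(g))$ in the uniform topology then follows by the modulus-of-continuity criterion as in Proposition~\ref{tightprop}, so $(\Lambda^N(t))$ is tight in ${\cal D}(\R_+,{\cal P}(\N^d))$ and any limit point $(\Lambda(t))$ is continuous and satisfies the $d$-dimensional analogue of~\eqref{WeakLimit}.

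Uniqueness of this limit equation, and the existence and uniqueness of the nonlinear Markov process $(\overline{T}(t))$ whose transition rates are the ones stated, are proved together as in Theorem~\ref{McKeanTheo} and Proposition~\ref{UniqProp}: a Picard--Gr\"onwall fixed-point argument on the map $\Psi$ that sends the law $\pi$ on ${\cal D}_T$ to the law of the solution of the SDE where the nonlinear rates $\lambda\E(\overline{T}_{k-1}(t))$ are replaced by $\lambda\int r_{k-1}\,\pi_t(\diff r)$, coupled via common Poisson processes. The main obstacle, as in Lemma~\ref{lem_bound}, is that the rates $\lambda\E(\overline{T}_{k-1}(t))$ involve first moments rather than bounded functionals of the law, so total-variation arguments must be complemented by a uniform exponential moment estimate of the form
\[
\sup_{0\leq t\leq T}\E\bigl(\exp(\theta\,\textstyle\sum_k \overline{T}_k(t))\bigr)<\infty
\]
for some $\theta{>}0$, obtained by testing~\eqref{WeakLimit} against $g(r){=}\exp(\theta\sum_k r_k)\ind{\sum_k r_k\leq K_1}$ and letting $K_1{\to}\infty$ via Gr\"onwall, just as in Lemma~\ref{lem_bound}. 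This is the place where the boundedness by $d$ of the number of copies per file, together with the reset-to-$(0,\ldots,0)$ transition at rate $\mu$, plays an essential role. Once uniqueness is established, taking $\Lambda_0{=}\delta_{(T_{i,k}(0))}$ identifies the limit with the announced nonlinear Markov process and completes the proof.
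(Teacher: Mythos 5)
Your proposal follows essentially the same route as the paper, which for this theorem only asserts that the $d{=}2$ machinery of Sections~\ref{McV-sec}--\ref{MF-sec} carries over to the dominating process; you correctly flesh out each ingredient and identify the one genuinely new point, namely that the nonlinearity $\lambda\E(\overline{T}_{k-1}(t))$ is an unbounded first-moment functional rather than the bounded probability $\P(\overline{R}_1(t){>}0)$, so that the total-variation Gr\"onwall argument must be supplemented by the exponential-moment estimate in the spirit of Lemma~\ref{lem_bound}. One slip to fix: in your description of the empirical-measure equation the interaction term is the empirical first moment $\Lambda^N(s)(r\mapsto r_{k-1})$, arising from $\lambda\frac{k-1}{N-k+1}\sum_{f}\ind{i\notin{\cal B}_f^N(s),\, d_f^N(s-)=k-1}\approx\frac{\lambda}{N}\sum_{j}T^N_{j,k-1}(s)$, and not $\Lambda^N(s)(\{r: r_{k-1}{>}0\})$; you use the correct form in the uniqueness paragraph, and the second-moment and exponential-moment bounds you establish are exactly what is needed to control this unbounded functional in the tightness and negligibility steps, so the overall argument stands.
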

An argument similar to that in the proof of Theorem~\ref{McKeanTheo} shows the existence and  uniqueness of the Markov process $(\overline{T}(t))$. Note that the nonlinear component is now given by the vector of the mean values $\E(\overline{T}_k(t))$, $k{=}1$, \ldots, $d$.

The limiting Markov process $(\overline{T}(t)){=}(\overline{T}_k(t))$ can also be seen as the solution of the following SDEs, for $t\geq 0$
\begin{multline}\label{eqd1}
    \diff \overline{T}_{1}(t) =\displaystyle {-} \overline{T}_{1}(t{-}){\cal N}_{\mu}(\diff t) \displaystyle  {-} \int_{\R_+}\ind{0\le h\le\overline{T}_{1}(t-)}\overline{{\cal N}}_{\lambda}(\diff t,\diff h ) \\ {+} \int_{\R_+}\ind{0\le h\le\overline{T}_{2}(t-)}\overline{{\cal N}}_{\mu}(\diff t,\diff h ),
\end{multline}
for $1{<}k{<}d$,
\begin{multline}\label{eqd2}
\diff \overline{T}_{k}(t) =\displaystyle {-} \overline{T}_{k}(t{-}){\cal N}_{\mu}(\diff t)\\
{-} \int_{\R_+}\ind{0\le h\le\overline{T}_{k}(t-)}\overline{{\cal N}}_{k\lambda}(\diff t,\diff h ) +\int_{\R_+}\ind{0\le h\le\overline{T}_{k-1}(t-)}\overline{{\cal N}}_{(k-1)\lambda}(\diff t,\diff h )\\
-\int_{\R_+}\ind{0\le h\le\overline{T}_{k}(t-)}\overline{{\cal N}}_{(k-1)\mu}(\diff t,\diff h ) {+} \int_{\R_+}\ind{0\le h\le\overline{T}_{k+1}(t-)}\overline{{\cal N}}_{k\mu}(\diff t,\diff h )\\
+\int_{\R_+}\ind{0\le h\le\E(\overline{T}_{k-1}(t))}\overline{{\cal N}}_{\lambda,k-1}(\diff t,\diff h ),
\end{multline}
\begin{multline}\label{eqd3}
\diff \overline{T}_{d}(t) =\displaystyle {-} \overline{T}_{d}(t{-}){\cal N}_{\mu}(\diff t) +\int_{\R_+}\ind{0\le h\le\E(\overline{T}_{d-1}(t))}\overline{{\cal N}}_{\lambda,d-1}(\diff t,\diff h )\\
+\int_{\R_+}\ind{0\le h\le\overline{T}_{d-1}(t-)}\overline{{\cal N}}_{(d-1)\lambda}(\diff t,\diff h )
-\int_{\R_+}\ind{0\le h\le\overline{T}_{d}(t-)}\overline{{\cal N}}_{(d-1)\mu}(\diff t,\diff h ).
\end{multline}

The interesting property of these SDEs is that the vector of expected values can be expressed as the solution of a classical ODE, as the next proposition states.
\begin{prop}\label{propM}
For $t{\geq} 0$, the function $V(t){=}\E(\overline{T}_k(t/\mu))$ satisfies
\begin{equation}\label{eqdM}
\frac{\diff}{\diff t} V(t)=M_\rho{\cdot} V(t),
\end{equation}
with  
\[
M_\rho=\begin{pmatrix}
{-}(\rho{+}1)&1&0&& &0\\
2\rho& {-}2(\rho{+}1)& 2&0&&0\\
0& 3\rho& {-}3(\rho{+}1)& 3&0&0\\
\ldots& \ldots&\ldots&\ldots&\ldots&\ldots\\
0&0& k\rho&{-}k(\rho{+}1)&k&0\\
\ldots& \ldots&\ldots&\ldots&\ldots&\ldots\\
0& &&0&d \rho&-d\\
\end{pmatrix}
\]
and $\rho{=}\lambda/\mu$. Moreover, the matrix $M_\rho$ has $d$ distinct negative eigenvalues and the largest of them, ${-}\kappa_d^+(\rho)$, satisfies
\begin{equation}\label{upkd}
    0<\kappa_d^+(\rho)\leq \overline{\kappa}_d(\rho)\stackrel{\text{\rm def.}}{=}\left(\sum_{k=1}^d \frac{\rho^{k-1}}{k}\right)^{-1}<1.
\end{equation}
Finally, there exists a positive constant $K_0$ such that, for all $1\leq k\leq d$ and $t\geq 0$,
\begin{equation}\label{alt_decay}
\Ept{\overline{T}_k(t)}\leq K_0 e^{-\mu\kappa_d^+(\rho)t}.
\end{equation}
\end{prop}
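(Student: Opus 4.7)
The plan is to derive a system of linear ODEs for the vector of means, then analyze the spectrum of its coefficient matrix via symmetrization, a quadratic-form bound, and a carefully chosen test vector in the Rayleigh quotient.

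I would first take expectations on both sides of the SDEs (\ref{eqd1})-(\ref{eqd3}): each $\overline{\mathcal{N}}_\xi$-integral contributes a deterministic linear rate after compensation, and the nonlinear-input terms $\ind{0\le h\le\E(\overline{T}_{k-1}(t))}\overline{\mathcal{N}}_{\lambda,k-1}$ contribute $\lambda\E(\overline{T}_{k-1}(t))$. Collecting coefficients row by row, for $W_k(s)\steq{def}\E(\overline{T}_k(s))$ one obtains, for $1<k<d$,
\[
\dot W_k(s)=\mu k\bigl[-(1+\rho)W_k(s)+\rho W_{k-1}(s)+W_{k+1}(s)\bigr],
\]
together with the boundary rows $\dot W_1=\mu[-(1+\rho)W_1+W_2]$ and $\dot W_d=\mu d[-W_d+\rho W_{d-1}]$. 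Hence $\dot W=\mu M_\rho W$, and the time change $V(t)\steq{def}W(t/\mu)$ yields \eqref{eqdM}.

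Next I would symmetrize $M_\rho$. With $d_k\steq{def}\sqrt{k\rho^{k-1}}$ and $D=\mathrm{diag}(d_1,\dots,d_d)$, a direct check shows that $\tilde M\steq{def}D^{-1}M_\rho D$ is symmetric tridiagonal with the same diagonal as $M_\rho$ and off-diagonals $\tilde M_{k,k+1}=\tilde M_{k+1,k}=\sqrt{k(k+1)\rho}>0$. As an irreducible Jacobi matrix, $\tilde M$ has $d$ distinct real eigenvalues, which coincide with those of $M_\rho$. To locate them in $(-\infty,0)$, I would bound the quadratic form via the elementary inequality $2\sqrt{k(k+1)\rho}\,x_kx_{k+1}\le k\rho\,x_k^2+(k+1)x_{k+1}^2$; the resulting sums telescope and collapse to $x^\top\tilde M x\le -x_1^2$, with equality forcing $x=0$. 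Hence $\tilde M$ is negative definite and all $d$ eigenvalues of $M_\rho$ are strictly negative.

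For the upper bound on $-\kappa_d^+(\rho)$, the largest eigenvalue, I would apply the Rayleigh characterization with the test vector $w_k\steq{def}d_k/k=\rho^{(k-1)/2}/\sqrt k$. A short computation using $\sqrt{k\rho/(k-1)}\,d_{k-1}=d_k$ and $\sqrt{k\rho/(k+1)}\,d_{k+1}=\rho\,d_k$ shows that $(\tilde M w)_k=0$ for every $k\ge 2$ while $(\tilde M w)_1=-(1+\rho)+\rho=-1$; hence $w^\top\tilde M w=-1$, whereas $\|w\|^2=\sum_{k=1}^d\rho^{k-1}/k=1/\overline{\kappa}_d(\rho)$. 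The Rayleigh inequality then gives $\kappa_d^+(\rho)\le\overline{\kappa}_d(\rho)$, and the strict bound $\overline{\kappa}_d(\rho)<1$ follows from $\sum_{k=1}^d\rho^{k-1}/k>1$ whenever $d\ge 2$ and $\rho>0$. Finally, since $M_\rho$ is diagonalizable with all eigenvalues bounded above by $-\kappa_d^+(\rho)<0$, the solution to \eqref{eqdM} expands as $V(t)=\sum_j c_j e^{-\kappa_j t}v_j$ with $\kappa_j\ge\kappa_d^+(\rho)$, so each coordinate is bounded by $K_0\,e^{-\kappa_d^+(\rho)t}$ for a constant $K_0$ depending on $V(0)$; reversing the time change yields \eqref{alt_decay}. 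I expect the main insight to be the choice of the Rayleigh test vector $w_k=\rho^{(k-1)/2}/\sqrt k$, whose structure makes the sum $\sum_k \rho^{k-1}/k$ appear naturally; the remaining steps are largely routine linear algebra and bookkeeping.
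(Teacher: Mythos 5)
Your proposal is correct and follows essentially the same route as the paper: take expectations of the SDEs to get the linear system, conjugate by the diagonal matrix $\mathrm{diag}(\sqrt{k\rho^{k-1}})$ to symmetrize, establish negative definiteness from the quadratic form (your AM--GM plus telescoping is just the expanded version of the paper's exact sum-of-squares identity $q(x)=-\sum(\sqrt{k\rho}\,x_k-\sqrt{k+1}\,x_{k+1})^2-x_1^2$), and use the very same Rayleigh test vector $w_k=\rho^{(k-1)/2}/\sqrt{k}$ to get $\kappa_d^+(\rho)\leq\overline{\kappa}_d(\rho)$. The only cosmetic difference is that you justify the distinctness of the eigenvalues by irreducibility of the Jacobi matrix rather than by citing Fallat and Johnson, which is if anything slightly more careful.
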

\begin{proof}
Equation~\eqref{eqdM} can be obtained by taking the expected value of both sides of the integral version of Equations~\eqref{eqd1},~\eqref{eqd2} and~\eqref{eqd3}. For the next claim, since the matrix $M_\rho$ is a tridiagonal matrix, it has $d$ distinct real eigenvalues (see e.g.\ Chapter 1 of Fallat and Johnson~\cite{Fallat}).  If $D$ is the $d{\times}d$ diagonal  matrix whose $k$th diagonal component is ${ 1/\sqrt{k\rho^{k-1}}}$, then
\[
DM_\rho D^{-1}{=}\begin{pmatrix}
{-}(\rho{+}1)&\sqrt{2\rho}&0&& &0\\
\sqrt{2\rho}& {-}2(\rho{+}1)& \sqrt{6\rho}&0&&0\\
0& \sqrt{6\rho}& {-}3(\rho{+}1)& \sqrt{12\rho}&0&0\\
\ldots& \ldots&\ldots&\ldots&\ldots&\ldots\\
0&0& \sqrt{k(k{-}1)\rho}&{-}k(\rho{+}1)&\sqrt{k(k{+}1)\rho}&0\\
\ldots& \ldots&\ldots&\ldots&\ldots&\ldots\\
0& &&0&\sqrt{d(d{-}1) \rho}&-d\\
\end{pmatrix}
\]
is a symmetric matrix with the same eigenvalues as $M_\rho$. A straightforward calculation shows that its associated quadratic form is given by
\[
q(x_1,...,x_d)\stackrel{\text{\rm def.}}{=}-\sum_{i=1}^{d-1}\left(\sqrt{k\rho}x_k-\sqrt{(k{+}1)}x_{k+1}\right)^2-x_1^2, \quad (x_1,...,x_d)\in\R^d,
\]
which implies that all eigenvalues of $M_\rho$ are negative.   The  maximal eigenvalue of the symmetric matrix can be expressed as 
\[
\sup\left(q(y): y=(y_1,...,y_d)\in\R^d, \|y\|=1\right),
\]
with  $\|y\|^2{=}y_1^2{+}\cdots{+}y_d^2$, see e.g.\ page 176 of Horn and Johnson~\cite{Horn}. Taking the vector $x{=}(x_1,...,x_d)$ such that
\[
x_{k}{=}x_{k-1}\sqrt{\frac{k{-}1}{k}\rho},\quad 1<k\leq d,
\]
and $x_1$ is chosen so that $\|x\|{=}1$, one gets the upper bound~\eqref{upkd}.

Finally, Equation~\eqref{eqdM} shows that the components of $V(\cdot)$ can be expressed as a linear combination of the functions  $(\exp(\lambda_k\mu t))$, $1{\leq} k{\leq} d$. Since  all  eigenvalues of $M_\rho$ are negative,  ${-}\kappa_d^+$ is the largest eigenvalue, Relation~\eqref{alt_decay} follows.
\end{proof}

\medskip

\renewcommand{\theenumi}{\arabic{enumi}}
\noindent
  {\bf Remarks}
    \begin{enumerate}
\item We have  not been able to get a closed form expression for the actual exponential decay rate $\kappa_d^+(\rho)$ associated to the process $(\overline{T}_k(t))$. However, the upper bound ${\overline{\kappa}_d(\rho)}$ defined in Equation~\eqref{upkd} gives a lower bound for the decay rate. In Figure~\ref{Fig3}, we plot the ration $\bar\kappa_d(\rho)/\kappa^+_d(\rho)$ for different values of $\rho$ and $d$. 

\item Note that if the duplication rate $\lambda$ is larger than $\mu$, i.e.\ $\rho{>}1$, then
\[
\lim_{d\to+\infty} {{\kappa}_d^+(\rho)}=0.
\]
 \end{enumerate}
    
\begin{figure}[ht]
\centering
\includegraphics[scale=0.35]{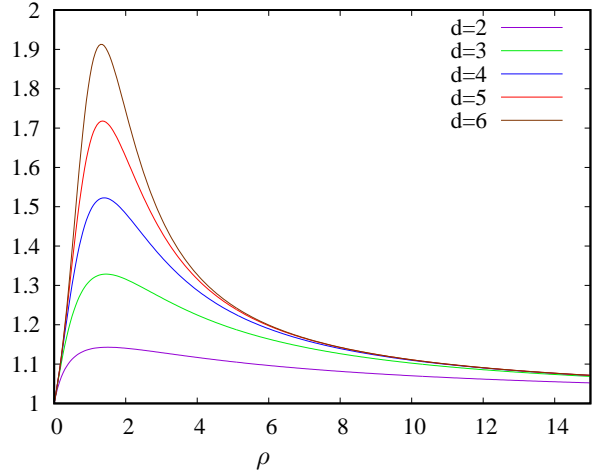}
\put(-150,15){$\rho$}
 \vspace{-5mm}
\caption{Accuracy of the upper bound of Relation~\eqref{upkd}: The ratio ${\overline{\kappa}_d(\rho)}/{\kappa_d^+(\rho)}$ for various values of $\rho$ and $d$.}\label{Fig3}
\end{figure}

Finally, for the case $d{=}2$, we can compare our result on the decay rate of file system with the decay rate of the process $(\E(\overline{T}_k(t)))$. 
\begin{corollary}\label{S-prop}
For $d{=}2$, if $(\overline{T}_1(0),\overline{T}_2(0)){=}(0,r_2)$, we have
\begin{align*}
\Ept{\overline{T}_1(t)} & =\frac{r_2}{\kappa_2^+(\rho)-\kappa_2^-(\rho)}\left(e^{-\mu\kappa_2^+(\rho) t}-e^{-\mu\kappa_2^-(\rho) t}\right)\\[2mm]
\Ept{\overline{T}_2(t)} & = \frac{r_2}{\kappa_2^+(\rho)-\kappa_2^-(\rho)}\left(y_+e^{-\mu\kappa_2^+(\rho)t}-y_-e^{-\mu\kappa_2^-(\rho) t}\right),
\end{align*}
where
\[
\kappa_2^\pm(\rho)\stackrel{\text{\rm def.}}{=}\frac{(3{+}\rho)\pm\sqrt{(3{+}\rho)^2{-}8}}{2}
\]
and  $y_\pm{\steq{def}}{-}\kappa_2^\pm{+}\rho{+}1.$
\end{corollary}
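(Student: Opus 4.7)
The strategy is to reduce the corollary to an explicit 2-dimensional linear ODE with constant coefficients and solve it by diagonalisation. By Proposition~\ref{propM} specialized to $d{=}2$, the vector $W(t){\steq{def}}(\E[\overline{T}_1(t)],\E[\overline{T}_2(t)])^T$ satisfies $\dot W(t)=\mu M_\rho W(t)$, with
\[
M_\rho=\begin{pmatrix}-(\rho{+}1)&1\\ 2\rho&-2\end{pmatrix},\qquad W(0)=(0,r_2)^T.
\]
So the entire content of the statement is the explicit solution of this $2{\times}2$ linear system with the given initial data.

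The plan is as follows. First, I would compute the spectrum of $M_\rho$ directly: its characteristic polynomial is $\lambda^2{+}(3{+}\rho)\lambda{+}2{=}0$, whose roots are exactly $-\kappa_2^+(\rho)$ and $-\kappa_2^-(\rho)$ as defined in the corollary. For each eigenvalue $-\kappa$, the equation $(M_\rho{+}\kappa I)v{=}0$ yields $v_2{=}(\rho{+}1{-}\kappa)v_1$, so an eigenvector is $(1, y_\pm)^T$ with $y_\pm{=}-\kappa_2^\pm(\rho){+}\rho{+}1$, which matches the notation used in the corollary.

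Next, write the general solution as
\[
W(t)=c_+ e^{-\mu\kappa_2^+(\rho) t}\begin{pmatrix}1\\ y_+\end{pmatrix}+c_- e^{-\mu\kappa_2^-(\rho) t}\begin{pmatrix}1\\ y_-\end{pmatrix}.
\]
The initial condition $W(0){=}(0,r_2)^T$ gives the linear system $c_++c_-{=}0$ and $c_+ y_+ + c_- y_- = r_2$, which is immediate to solve: $c_-{=}-c_+$ and $c_+(y_+-y_-)=r_2$. Observing that $y_+-y_-=\kappa_2^-(\rho)-\kappa_2^+(\rho)$ (with the sign chosen according to the convention adopted in the corollary's redefinition of $\kappa_2^\pm$), this yields $c_+ = r_2/(y_+-y_-)$. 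Substituting back into the formulas for the first and second components produces precisely the stated expressions for $\E[\overline{T}_1(t)]$ and $\E[\overline{T}_2(t)]$, the second one carrying the extra factor $y_\pm$ from the eigenvectors.

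There is no genuine obstacle here; the proof is a direct diagonalisation, and Proposition~\ref{propM} has already identified the eigenvalues with $-\kappa_2^\pm(\rho)$. The only point requiring any care is bookkeeping of the sign convention in the definition of $\kappa_2^\pm$, so that the two exponentials $e^{-\mu\kappa_2^\pm(\rho)t}$ are attached to the correct eigenvectors and coefficients.
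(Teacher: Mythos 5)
Your proposal is correct and follows essentially the same route as the paper: both arguments reduce the corollary to the explicit solution of the same $2{\times}2$ constant-coefficient linear ODE, whose eigenvalues are $-\mu\kappa_2^\pm(\rho)$ and whose eigenvector data are exactly the quantities $y_\pm$. The only cosmetic difference is that the paper does not re-diagonalise: it observes that $(\E[\overline{T}_1(t)],\E[\overline{T}_2(t)])$ satisfies the system~\eqref{ode} with $p(t){=}m_1(t)$, hence $g{\equiv}0$, and then reads the result off the already-derived formulas~\eqref{m1} and~\eqref{m2}, whereas you solve the system from scratch starting from Proposition~\ref{propM} -- your careful bookkeeping of $y_+{-}y_-{=}\kappa_2^-(\rho){-}\kappa_2^+(\rho)$ is exactly the right point to watch.
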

The definitions with $\pm$ just indicate that the identities are taken for $+$ and $-$ separately. Note that the definition of $\kappa_2^+(\rho)$ is consistent with the definition of Proposition~\ref{propM}.

\begin{proof}[Proof of Corollary \ref{S-prop}]
The above proposition can be used but the work has already been done to prove  Proposition~\ref{up-prop}. It is not difficult to show that
\[
\left({m}_1(t),{m}_2(t)\right)=\left(\E\left(\overline{T}_1(t)\right),  \E\left(\overline{T}_2(t)\right)\right)
\]
satisfies Relation~\eqref{ode}   with $p(t){=}{m}_1(t)$, i.e.\ so that $g(t){=}0$ with the notations of the  proof of Proposition~\ref{up-prop}. One concludes by using Relations~\eqref{m1} and~\eqref{m2}. 
\end{proof}

\subsection*{A Bound on the Exponential Decay Rate of the Algorithm}
The following proposition gives an estimation of the rate of decay of the network. 
\begin{prop}
  If $L^N(t)$ is the number of files alive at time $t$, 
  \[
  L^N(t)=\sum_{f\in{\cal F}_N}\ind{{\cal A}^N_f(t)\not=\emptyset},
  \]
then there exists a constant $K_1{>}0$ such that, for all $t{\geq}0$,
\[
    \limsup_{N\to+\infty} \E\left(\frac{L^N(t)}{N}\right)\leq K_1e^{-\mu\kappa_d^+(\rho) t},
\]
where $\kappa_d^+(\rho)$ is the constant defined in Proposition~\ref{propM}.
\end{prop}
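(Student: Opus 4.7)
The plan is to reduce the bound on $L^N(t)$ to the already-analyzed dominating process $(\mathcal{B}^N(t))$ and then invoke Theorem~\ref{MF-Theo2} together with the exponential decay in Proposition~\ref{propM}. First I would use the coupling lemma of the previous subsection to write $\mathcal{A}_f^N(t)\subset \mathcal{B}_f^N(t)$ almost surely for every $f\in\mathcal{F}_N$, so that
\[
L^N(t)=\sum_{f\in\mathcal{F}_N}\ind{\mathcal{A}^N_f(t)\neq\emptyset}\le \widetilde L^N(t)\stackrel{\text{def.}}{=}\sum_{f\in\mathcal{F}_N}\ind{\mathcal{B}^N_f(t)\neq\emptyset}.
\]
Next, a double-counting identity expresses $\widetilde L^N(t)$ through the reduced variables of the dominating model: a file $f$ with $d^N_f(t)=k$ copies is counted exactly $k$ times in $\sum_{i=1}^N T_{i,k}^N(t)$, so
\[
\widetilde L^N(t)=\sum_{k=1}^d\frac{1}{k}\sum_{i=1}^N T_{i,k}^N(t)=N\sum_{k=1}^d\frac{1}{k}\int_{\N^d}r_k\,\Lambda^N(t)(\diff r).
\]

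The next step is to pass to the limit as $N\to\infty$. By Theorem~\ref{MF-Theo2} the empirical distribution $\Lambda^N(t)$ converges in distribution to $\Lambda(t)$, the law of $(\overline T_k(t))$, and one would like to conclude
\[
\lim_{N\to\infty}\frac{1}{N}\sum_{i=1}^N \Ept{T_{i,k}^N(t)}=\Ept{\overline T_k(t)}.
\]
Because $r\mapsto r_k$ is unbounded, weak convergence alone does not suffice, and one must justify uniform integrability. This is the step where the main work lies: one has to establish the analogue of Lemma~\ref{Tech_set} for the dominating process, namely $\sup_N\E[(T_{i,k}^N(t))^2]<\infty$, exchangeability in $i$ giving $\E[T_{i,k}^N(t)]=\E[T_{1,k}^N(t)]$. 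The $L^2$ bound is obtained along the same lines as in Lemma~\ref{Tech_set}: the initial load on node~$1$ has finite variance by Assumption~\ref{asm_initial2}, while the additional files received on node~$1$ during $[0,T]$ are dominated by the sum over $j\neq 1$ of the duplications directed to $1$, itself controlled in $L^2$ by a Poisson total with intensity $\lambda T$.

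Having the uniform integrability in hand, one takes expectations in the decomposition above to obtain
\[
\E\left(\frac{L^N(t)}{N}\right)\le \sum_{k=1}^d\frac{1}{k}\cdot \frac{1}{N}\sum_{i=1}^N \Ept{T_{i,k}^N(t)}\xrightarrow[N\to\infty]{}\sum_{k=1}^d\frac{1}{k}\Ept{\overline T_k(t)}.
\]
Finally, Relation~\eqref{alt_decay} of Proposition~\ref{propM} yields $\E(\overline T_k(t))\le K_0 e^{-\mu\kappa_d^+(\rho)t}$ uniformly in $k$, and therefore the target inequality holds with $K_1=K_0\sum_{k=1}^d 1/k$. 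The only delicate point in the argument is the moment estimate for $T_{i,k}^N(t)$, which is needed to turn the mean-field weak convergence into a convergence of means; all the other steps are direct consequences of the coupling, the combinatorial identity and Proposition~\ref{propM}.
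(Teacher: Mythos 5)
Your proposal is correct and follows essentially the same route as the paper: the coupling with the dominating process $({\cal B}^N(t))$, the double-counting identity expressing the number of surviving files as $\sum_k \frac{1}{k}\sum_i T_{i,k}^N(t)$, the passage to the limit via Theorem~\ref{MF-Theo2} upgraded to convergence of means by a second-moment bound analogous to Lemma~\ref{Tech_set}, and the conclusion via Inequality~\eqref{alt_decay}. You correctly identify the uniform-integrability step as the only point requiring real work, which is exactly the point the paper also flags (and leaves at the same level of detail).
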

\begin{proof}
  By using the coupling of the last proposition, one gets 
\begin{multline*}
  \E\left(L^N(t)\right)\leq \sum_{f\in{\cal F}_N}\P\left({\cal B}^N_f(t){\not=}\emptyset\right)
  \\  \leq \sum_{f\in{\cal F}_N}\E\left({\cal B}^N_f(t)\right)   =\sum_{i=1}^N \sum_{k=1}^d \frac{1}{k}\E\left(T_{i,k}^N(t)\right)
    =N \sum_{k=1}^d  \frac{1}{k}\E\left(T_{1,k}^N(t)\right).
\end{multline*}
Theorem~\ref{MF-Theo2} gives the convergence of the sequence of processes $(T_{1,k}^N(t))$ to the solution  of the EDS~\eqref{eqd1}, \eqref{eqd3} and~\eqref{eqd3}. It is not difficult to obtain an analogue of Lemma~\ref{Tech_set} which guarantee the boundedness of the second moments of the variables $T_{1,k}^N(t)$, $k{=}1$,\ldots,$d$, which gives the convergence of the first moments. One has obtained the relation,
\[
    \limsup_{N\to+\infty} \E\left(\frac{L^N(t)}{N}\right)\leq \sum_{k=1}^d  \frac{1}{k}\E\left(\overline{T}_{k}(t)\right),
    \]
one concludes with Inequality~\eqref{alt_decay}. The proposition is proved.     
\end{proof}

\providecommand{\bysame}{\leavevmode\hbox to3em{\hrulefill}\thinspace}
\providecommand{\MR}{\relax\ifhmode\unskip\space\fi MR }
% \MRhref is called by the amsart/book/proc definition of \MR.
\providecommand{\MRhref}[2]{%
  \href{http://www.ams.org/mathscinet-getitem?mr=#1}{#2}
}
\providecommand{\href}[2]{#2}

\end{document}